\documentclass[12pt,oneside]{amsart}
\usepackage[a4paper, total={4.5in, 7.5in}]{geometry}

\usepackage{amsthm,amsmath,amssymb}
\usepackage{mathrsfs}
\usepackage{enumerate}
\usepackage{amsfonts}
\usepackage{verbatim}
\usepackage{amsbsy}
\usepackage{amsmath}
\usepackage{amssymb}
\usepackage{MnSymbol}
\usepackage{mathrsfs}
\usepackage{xcolor}
\usepackage{cite}


\theoremstyle{definition}

\theoremstyle{remark}

\newtheorem{dfn}{Definition}[section]
\newtheorem{thm}{Theorem}[section]
\newtheorem{prop}{Proposition}[section]
\newtheorem{lem}{Lemma}[section]
\newtheorem{cor}{Corollary}[section]
\newtheorem{rem}{Remark}[section]
\newtheorem{eg}{Example}[section]


\DeclareMathOperator{\FR}{FR}

\DeclareMathOperator{\id}{id}
\DeclareMathOperator{\fr}{FR}
\DeclareMathOperator{\ot}{OT}
\DeclareMathOperator{\cod}{Cod}
\DeclareMathOperator{\rn}{Rn}
\DeclareMathOperator{\dom}{Dom}

\addtolength{\oddsidemargin}{-10mm}
\addtolength{\evensidemargin}{-10mm}
\addtolength{\textwidth}{15mm}

\addtolength{\topmargin}{-7mm}
\addtolength{\textheight}{25mm}

\begin{document}

\title[Heterogeneous Ramsey Algebras]{Heterogeneous Ramsey Algebras and Classification of Ramsey Vector Spaces}
\author{Zu Yao Teoh}
\address{School of Mathematical Sciences\\
Universiti Sains Malaysia\\
11800 USM, Malaysia}
\email{teohzuyao@gmail.com}
\author{Wen Chean Teh*}\thanks{*Corresponding author.}
\address{School of Mathematical Sciences\\
Universiti Sains Malaysia\\
11800 USM, Malaysia}
\email{dasmenteh@usm.my}

\begin{abstract}
Carlson introduced the notion of a Ramsey space as a generalization to the Ellentuck space. When a Ramsey space is induced by an algebra, Carlson
suggested a study of its purely combinatorial version now called Ramsey algebra. Some basic results for homogeneous algebras have been obtained. In
this paper, we introduce the notion of a Ramsey algebra for heterogeneous algebras and derive some basic results. Then, we study the Ramsey-algebraic
properties of vector spaces.
\end{abstract}

\keywords{Ramsey algebra, Hindman's theorem,   Ramsey space, Ellentuck Space}
\subjclass[2000]{03B80, 05D10, 03E05}
\maketitle

\section{Introduction}\label{intro}
The notion of a Ramsey space was introduced by Carlson in \cite{carlson1988some}. In the modern literature, this notion is referred to as a topological
Ramsey space. The class of topological Ramsey spaces induced by algebras has been singled out to be studied from a purely combinatorial point of view.
An algebra is a structure consisting of a family of sets, called the domain of the algebra, and a family of operations on these sets. Hindman's Theorem
states that for each positive integer $r$ and each coloring $c:\mathbb{Z}^+\to\{1, \ldots, r\}$, there exists an infinite $S\subseteq\mathbb{Z}^+$ such
that $c$ is constant on the set $\left\{\sum_{i\in F} i:F\subseteq S, F\:\text{is finite}\right\}$. The same theorem can be cast in terms of finite
subsets of the natural numbers \cite{milliken1975ramsey}. Considering questions about infinite sets along the same line, a result of Erd\"{o}s-Rado
\cite{erdos1952combinatorial} shows that not all sets of reals\footnote{We are adopting the convention in Set Theory, where the infinite subsets of
natural numbers are identified with the real numbers.} possess similar property. Hence, it is natural to ask which among those definable
sets--specifically, sets belonging to the $\sigma$-algebra generated by the usual topology on the reals--have the analogous property. Galvin-Prikry
\cite{GP73} showed that the Borel sets do, whereas, using the method of forcing, Silver \cite{silver1970every} showed that the analytic sets have the
desired property.

Ellentuck in \cite{ellentuck1974new} showed that the result of Silver could also be proved using topological formulation without having to appeal to
metamathematical methods. In his proof, Ellentuck introduced what is to be called the Ellentuck topology. A generalization of the Ellentuck topology is
then introduced by Carlson in \cite{carlson1988some}, which he called (topological) Ramsey space. Carlson showed in the same paper that classical
results such as the Hales-Jewett Theorem and Hindman's Theorem can be derived as corollaries to his results on the Ramsey space of variable words.

If one is to ask whether a given structure is a Ramsey space, the definition requires that one checks some topological properties of the space. However,
Carlson's abstract version of Ellentuck's Theorem (cf. \cite{carlson1988some}) turns such topological question into a combinatorial one. Carlson had, in
fact, suggested that one could embark on a specifically combinatorial study of spaces induced by algebras; this study was then initiated by Teh [cf.
\cite{teh2016ramsey} \& \cite{wcT13a}] and the topic became known as Ramsey algebra.

There is already quite a number of existing Ramsey algebraic results on algebras commonly encountered in the mathematical literature; in particular, a
result on infinite integral domains, which will be applied to our work in this paper, can be found in \cite{teh2016ramsey}. Other results including the
existence of idempotent ultrafilters can be found in \cite{wcT13b} and \cite{wcTIUF}. In this paper, we extend the study of Ramsey algebras to
many-sorted structures. We will first formulate a notion of Ramsey algebra for many-sorted algebras, then we will derive some results pertaining to
vector spaces. A latter section is then dedicated to the study of the homogeneous structure over a family of functions induced by the nonzero scalars of
a vector space.

\section{Preliminary}\label{prelim}
We begin with the notion of an \emph{algebra}. From a logical point of view, an algebra is a structure interpreting a many-sorted, purely functional
language. That is, an algebra $\mathcal{A}$ consists of a family $(A_\xi)_{\xi\in I}$ of sets and a family $\mathcal{F}$ of functions, where each
$f\in\mathcal{F}$ is a function with domain some Cartesian product $A_{\xi_1}\times\cdots\times A_{\xi_n}$ ($n\in\omega$) of some members of
$(A_\xi)_{\xi\in I}$ and codomain some member $A$ of $(A_\xi)_{\xi\in I}$. We write $\mathcal{A}=((A_\xi)_{\xi\in I}, \mathcal{F})$ for the algebra just
described. The family of sets pertaining to a given algebra is called the \emph{universe} or \emph{domain} of the algebra and each set in the family a
\emph{phylum} as per \cite{birkhoff1970heterogeneous}. Throughout this paper, we reserve the symbol $I$ for the index set of the algebra under
discussion.

When $I$ is a singleton, the algebra is said to be \emph{homogeneous}. Examples of homogeneous algebras are groups, rings, Boolean algebras, etc.. For a
homogeneous algebra, the notation $((A_\xi)_{\xi\in I}, \mathcal{F})$ will be simplified to $(A, \mathcal{F})$, where $A$ is the only phylum of the
domain. A vector space is a natural example of a heterogeneous algebra consisting of two phyla.

The set of natural numbers, inclusive of $0$, is denoted by $\omega$.  If $f$ is a function, $\dom(f)$ denotes the domain of $f$, $\cod(f)$ denotes the
codomain of $f$, and $\rn(f)$ denotes the range of $f$. We will identify any $n$-tuple $(x_1, \ldots, x_n)$ with the finite sequence $\langle x_1,
\ldots, x_n\rangle$ and, if $\bar{x}$ denote the $n$-tuple, $\vec{x}$ will denote the associated sequence, and vice versa. If $\vec{e}$ is an infinite
sequence, the notation $\vec{e}-n$ denotes the tail $\langle\vec{e}(n), \vec{e}(n+1), \ldots\rangle$ and $\vec{e}\upharpoonright n$ denotes the initial
segment $\langle\vec{e}(0), \ldots, \vec{e}(n)\rangle$. If $\sigma$ is a finite sequence, then $|\sigma|$ denotes the length of the sequence. If $A$ is
a nonempty set, an infinite sequence of $A$ is an element of ${^\omega}A$. If $D$ is a set, then $\id_D$ denotes the identity function on $D$.

While the family of phyla $(A_\xi)_{\xi\in I}$ in a given heterogeneous algebra $((A_\xi)_{\xi\in I}, \mathcal{F})$ need not be pairwise disjoint in
general, we shall always assume that the family is indeed pairwise disjoint in the study of heterogeneous Ramsey algebra following the assumption made
by Carlson in \cite{carlson1988some}.

\section{The Basic Notions of Heterogeneous Ramsey Algebras}\label{basics}
The study of Ramsey algebra is a fairly recent endeavor. In this section, we define the notion of a Ramsey algebra and the necessary definitions leading
up to it. We will begin with the notion of an orderly composition due to Carlson.

\begin{dfn}[Orderly Composition \& Orderly Terms]\label{ordcomp}
Let $\mathcal{F}$ be a family of operations on $(A_\xi)_{\xi\in I}$. An $n$-ary function $f$ is called an \emph{orderly composition} of $\mathcal{F}$ if
there exists $h_1, \ldots, h_k, g\in\mathcal{F}$ such that
\begin{enumerate}
\item $g$ is a $k$-ary function,
\item $h_j$ is an $n_j$-ary function for each $j\in\{1, \ldots, k\}$,
\item $\sum_{j=1}^k n_j=n$, and
\item if $\bar{x}_1=(x_1, \ldots, x_{n_1})$ and $\bar{x}_j=\left(x_{\sum_{i=1}^{j-1}n_i+1}, \ldots, x_{\sum_{i=1}^jn_i}\right)$ for each $j=\{2$,
    $\ldots$, $k\}$, then $f(x_1, \ldots, x_n)=g(h_1(\bar{x}_1), \ldots, h_k(\bar{x}_k))$.
\end{enumerate}

The collection $\ot(\mathcal{F})$ of \emph{orderly terms} over $\mathcal{F}$ is the \emph{smallest} collection of functions containing
$\mathcal{F}\cup\left\{\id_{A_\xi}\right\}_{\xi\in I}$ and is closed under orderly compositions.
\end{dfn}

The collection of orderly terms over $\mathcal{F}$ is in fact the collection of operations on $(A_\xi)_{\xi\in I}$ which can be generated by an
application of finitely many of the following rules:
\begin{enumerate}
\item for each $\xi\in I$, the identity function $\text{id}_{A_\xi}$ is an orderly term,
\item every operation  in $\mathcal{F}$ is an orderly term,
\item if $f$ is an operation on $(A_\xi)_{\xi\in I}$ given by $f(\bar{x}_1, \ldots, \bar{x}_k)=g(h_1(\bar{x}_1)$, $\ldots$, $h_k(\bar{x}_k))$ for some
    $g\in\mathcal{F}$ and some orderly terms $h_1, \ldots, h_k$, then $f$ is an orderly term.
\end{enumerate}

We denote the concatenation operation by $\ast$ in this paper.

\begin{dfn}[Reduction $\leq_\mathcal{F}$]\label{reduction}
Let $((A_\xi)_{\xi\in I}, \mathcal{F})$ be an algebra and let $\vec{a}$ and $\vec{b}$ be members of ${^\omega}\left(\bigcup_{\xi\in I}A_\xi\right)$.
Then $\vec{a}$ is said to be a \emph{reduction} of $\vec{b}$ if there exist orderly terms $f_j$ over $\mathcal{F}$ and finite subsequences $\vec{b}_j$
of $\vec{b}$ such that
\begin{enumerate}
\item $\vec{a}(j)=f_j(\bar{b}_j)$ for each $j\in\omega$ and
\item $\vec{b}_0\ast\vec{b}_1\ast\cdots$ forms a subsequence of $\vec{b}$.
\end{enumerate}
We write $\vec{a}\leq_\mathcal{F}\vec{b}$ to mean $\vec{a}$ is a reduction of $\vec{b}$.
\end{dfn}

The relation $\leq_\mathcal{F}$ is a preorder\footnote{A preorder is a relation that is reflexive and transitive.} on ${^\omega\left(\bigcup_{\xi\in
I}A_\xi\right)}$. Note that the inclusion of the identity functions in the set of orderly terms is necessary to ensure that the relation
$\leq_\mathcal{F}$ is reflexive.

\begin{dfn}\label{sort}
Suppose $((A_\xi)_{\xi\in I}, \mathcal{F})$ is an algebra and $\vec{e}\in{^\omega}I$. We say that $\vec{a}\in{^\omega\left(\bigcup_{\xi\in
I}A_\xi\right)}$ is \emph{$\vec{e}$-sorted} if $\vec{a}(n)\in A_{\vec{e}(n)}$ for each $n\in\omega$ and that $\vec{e}$ is the \emph{sort} of $\vec{a}$.
\end{dfn}

Note that the sort of a sequence is unique under our assumption that the sets in the family of phyla $(A_\xi)_{\xi\in I}$ are pairwise disjoint.

\begin{dfn}\label{FR}
If $\vec{b}$ is an $\vec{e}$-sorted sequence, define
\begin{equation*}
\FR_\mathcal{F}^{\vec{e}}(\vec{b})=\left\{\vec{a}(0):\vec{a}\leq_\mathcal{F}\vec{b}\;\text{and}\;\vec{a}\;\text{is}\;\vec{e}\text{-sorted}\right\}.
\end{equation*}
\end{dfn}

We end this section with the central notion of the paper.

\begin{dfn}[$\vec{e}$-Ramsey Algebra]\label{RA}
Let $\vec{e}$ be a sort. An algebra $((A_\xi)_{\xi\in I}, \mathcal{F})$ is said to be an $\vec{e}$-Ramsey algebra if, for each $\vec{e}$-sorted sequence
$\vec{b}$ and each $X\subseteq A_{\vec{e}(0)}$, there exists an $\vec{e}$-sorted reduction $\vec{a}$ such that $\FR^{\vec{e}}_\mathcal{F}(\vec{a})$ is
either contained in or disjoint from $X$.

Such a sequence $\vec{a}$ is said to be \emph{homogeneous} for $X$ (with respect to $\mathcal{F}$).
\end{dfn}

In the case where the algebra is homogeneous, the set of sorts would be a singleton and we shall drop the reference to $\vec{e}$ and refer any such
$\vec{e}$-Ramsey algebra as a \emph{Ramsey algebra}. Also, the set defined by Eq. \ref{FR} can be characterized by
\begin{equation}\label{homFR}
\fr_\mathcal{F}^{\vec{e}}(\vec{b})=\{f(\bar{b}_0):f\in\ot(\mathcal{F})\;\text{and}\;\vec{b}_0\;\text{is a finite subsequence of}\;\vec{b}\},
\end{equation}
in which case the notation will be simplified to $\fr_\mathcal{F}(\vec{b})$.

\begin{rem}\label{finitepartition}
If $\mathcal{A}$ is an $\vec{e}$-Ramsey algebra for a given $\vec{e}$, then for each finite partition $Q_1\cup\cdots\cup Q_N$ of $A_{\vec{e}(0)}$ and
for each $\vec{e}$-sorted $\vec{b}$, there exists an $\vec{e}$-sorted $\vec{a}\leq_\mathcal{F}\vec{b}$ and an $i^\ast\in\{1, \ldots, N\}$ such that
$\fr_\mathcal{F}^{\vec{e}}(\vec{a})\subseteq Q_{i^\ast}$.
\end{rem}

A classic result of Ramsey algebra is that of Hindman:
\begin{thm}[Hindman]\label{semigroup}
Every semigroup is a Ramsey algebra.
\end{thm}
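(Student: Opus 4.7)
The plan is to recognize Definition \ref{RA} in the case of a semigroup $(A,\cdot)$ with $\mathcal{F}=\{\cdot\}$ as the content of Hindman's Finite Products Theorem, and to outline the Galvin--Glazer ultrafilter proof. First I would unpack the definitions: by associativity, every $f\in\ot(\mathcal{F})$ applied to $(x_1,\ldots,x_n)$ returns the unambiguous product $x_1\cdots x_n$, so $\fr_\mathcal{F}(\vec{a})=\{a_{i_1}\cdots a_{i_k}:i_1<\cdots<i_k\}$ is precisely the set of all ordered finite products drawn from $\vec{a}$. Likewise $\vec{a}\leq_\mathcal{F}\vec{b}$ amounts to $\vec{a}$ being obtained by partitioning a subsequence of $\vec{b}$ into consecutive finite blocks and replacing each block by its product. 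The theorem therefore reduces to showing that, for each $\vec{b}\in{^\omega}A$ and each $X\subseteq A$, some such block-product sequence $\vec{a}$ has $\fr_\mathcal{F}(\vec{a})$ either contained in or disjoint from $X$.

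Next I would work in the Stone--\v{C}ech compactification $\beta A$ with the extended operation that makes $(\beta A,\cdot)$ a compact right-topological semigroup. Define $T=\bigcap_{n\in\omega}\overline{\fr_\mathcal{F}(\vec{b}-n)}\subseteq\beta A$. The sets $\overline{\fr_\mathcal{F}(\vec{b}-n)}$ form a decreasing chain of non-empty closed sets, so $T$ is non-empty and compact. The crucial check is that $T$ is a subsemigroup: given $p,q\in T$ and $n\in\omega$, one has $\fr_\mathcal{F}(\vec{b}-n)\in p\cdot q$ because for every $s\in\fr_\mathcal{F}(\vec{b}-n)$ there exists $m$ such that $s\cdot\fr_\mathcal{F}(\vec{b}-m)\subseteq\fr_\mathcal{F}(\vec{b}-n)$ (take $m$ beyond the last index of $\vec{b}$ used to form $s$). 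Ellis's theorem then supplies an idempotent $p=p\cdot p\in T$.

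Since $A\in p$, exactly one of $X$, $A\setminus X$ lies in $p$; without loss of generality $X\in p$. Using the standard fact that for an idempotent $p$ and any $Y\in p$ the set $Y^\star=\{a\in Y:a^{-1}Y\in p\}$ also lies in $p$, with $a^{-1}Y^\star\in p$ for every $a\in Y^\star$, I would recursively choose $a_0,a_1,\ldots$ so that each $a_j$ belongs to the intersection of finitely many ``good'' sets dictated by $p$ together with a tail set $\fr_\mathcal{F}(\vec{b}-N_j)$ for some strictly increasing sequence $N_0<N_1<\cdots$. Because each good set and each tail set lies in $p\in T$, these finite intersections are non-empty and the construction goes through. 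The resulting $\vec{a}$ is a reduction of $\vec{b}$, and $\fr_\mathcal{F}(\vec{a})\subseteq X$ follows by induction on the length of products, using the defining property of $Y^\star$. The principal obstacle is imposing the tail condition simultaneously with the ultrafilter conditions at each step, and this is handled precisely by the choice of $T$, which encodes the tail structure of $\vec{b}$ into every ultrafilter under consideration.
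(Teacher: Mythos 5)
The paper does not actually prove this statement: Theorem \ref{semigroup} is quoted as a classical result (Hindman's theorem in its Finite Products formulation) and used as a black box, so there is no internal proof to compare against. Your proposal supplies the standard Galvin--Glazer ultrafilter proof, and it is essentially correct. The preliminary translation is right: by associativity every orderly term over $\{\cdot\}$ of arity $n$ is the $n$-fold ordered product, so $\fr_{\mathcal{F}}(\vec{a})$ is exactly the set of finite ordered products from $\vec{a}$, and a reduction is a block-product subsequence; hence the Ramsey-algebra condition is precisely the strong (sequential) form of the Finite Products Theorem. The three pillars of the argument are all correctly identified: $T=\bigcap_n\overline{\fr_{\mathcal{F}}(\vec{b}-n)}$ is nonempty and compact by the finite intersection property, and your verification that it is a subsemigroup is the right one (for $s\in\fr_{\mathcal{F}}(\vec{b}-n)$ one has $s\cdot\fr_{\mathcal{F}}(\vec{b}-m)\subseteq\fr_{\mathcal{F}}(\vec{b}-n)$ for $m$ past the indices used in $s$, so $s^{-1}\fr_{\mathcal{F}}(\vec{b}-n)\in q$ for every $q\in T$); Ellis's theorem then yields an idempotent, and the $Y^\star$ recursion is standard, with $a^{-1}Y$ read as a preimage so that no cancellativity is needed. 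The one point that genuinely requires care --- making $\vec{a}$ an actual reduction of $\vec{b}$ rather than merely a sequence with $\fr_{\mathcal{F}}(\vec{a})\subseteq X$ --- you handle correctly by intersecting with the tail sets $\fr_{\mathcal{F}}(\vec{b}-N_j)$ for $N_j$ strictly beyond the last index of $\vec{b}$ consumed at the previous stage; this is exactly why $T$ must be cut down to the tails rather than taking an arbitrary idempotent of $\beta A$. To turn the sketch into a full proof you would only need to write out the inductive verification that $\fr_{\mathcal{F}}(\vec{a})\subseteq X^\star$, but no idea is missing.
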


\section{From Ramsey Spaces to Ramsey Algebras}\label{space}
The origin of Ramsey algebras has its roots in the notion a Ramsey space introduced by Carlson. As mentioned in the introductory section, largely due to
the ensuing work of Todorcevic \cite{sT10}, this notion is now referred to as a topological Ramsey space. This section is intended to give a short
account of the connection between Ramsey algebras and topological Ramsey spaces.

If $R$ is a nonempty set of infinite sequences equipped with a preorder $\preceq$, then for all $\vec{b}\in R$ and all $n\in\omega$, the sets
\begin{equation}\label{nbhdbasis}
[n, \vec{b}]=\{\vec{a}\in R:\vec{a}\preceq\vec{b}\;\text{and}\;\vec{a}\upharpoonright n=\vec{b}\upharpoonright n\}
\end{equation}
form a neighborhood basis of the \emph{natural topology} on $R$.

\begin{dfn}
Let $R$ be a nonempty set of infinite sequences equipped with a preorder $\preceq$ and let $X\subseteq R$. The set $X$ is said to be \emph{Ramsey} if,
for each $\vec{b}\in R$ and each $n\in\omega$, there exists $\vec{a}\in[n, \vec{b}]$ such that $[n, \vec{a}]\subseteq X$ or $[n, \vec{a}]\subseteq X^C$.
The set $X$ is said to be \emph{Ramsey null} in the event there exists an $\vec{a}\in[n, \vec{b}]$ for every given $\vec{b}$ and $n\in\omega$ such that
$[n, \vec{a}]\subseteq X^C$.
\end{dfn}

\begin{dfn}\label{RSdfn}
If $R$ is a nonempty set of infinite sequences equipped with the preorder $\preceq$, then $(R, \preceq)$ endowed with the natural topology is called a
\emph{topological Ramsey space} if every set which has the Property of Baire is Ramsey and every meager set is Ramsey null\footnote{Under the Axiom of
Choice, the latter property can be discarded.}.
\end{dfn}

The neighborhood sets given in Eq. \ref{nbhdbasis} and the natural topology so generated are defined with the preorder induced by the reduction
operation within an algebra $((A_\xi)_{\xi\in I}, \mathcal{F})$ in mind. We recall from Section 2 that the relation $\leq_\mathcal{F}$ for any given
algebra $((A_\xi)_{\xi\in I}, \mathcal{F})$ is a preordering on the infinite sequences of ${^\omega\!\left(\bigcup_{\xi\in I}A_\xi\right)}$; we will be
chiefly interested in the natural topology on subsets of ${^\omega\!\left(\bigcup_{\xi\in I}A_\xi\right)}$ determined by any given sort $\vec{e}$ as
these are the topologies connected to the notion of a $\vec{e}$-Ramsey algebra. For each sort $\vec{e}$, define $R^{\vec{e}}$ to be the set consisting
of $\vec{e}$-sorted sequences of $\bigcup_{\xi\in I}A_\xi$ equipped with the relation $\leq_\mathcal{F}\upharpoonright (R^{\vec{e}}\times R^{\vec{e}})$.
We will abuse notation and denote such restriction by the same symbol $\leq_\mathcal{F}$ when no confusion arises. Given an algebra $((A_\xi)_{\xi\in
I}, \mathcal{F})$ and a sort $\vec{e}$, we write $\mathfrak{R}^{\vec{e}}((A_\xi)_{\xi\in I}, \mathcal{F})$ for $\left(R^{\vec{e}},
\leq_\mathcal{F}\right)$ to make explicit the algebra involved. For emphasis, we also add $\vec{e}$ as superscript to the members of the neighborhood
basis $[n, \vec{b}]^{\vec{e}}=\{\vec{a}\in R^{\vec{e}}:\vec{a}\leq_\mathcal{F}\vec{b}\;\text{and}\;\vec{a}\upharpoonright n=\vec{b}\upharpoonright n\}$
to indicate the sort of the sequences in question.

The next two theorems of Carlson \cite{carlson1988some}, adapted to the current context, are key to relating the notion of a Ramsey algebra to that of a
topological Ramsey space. The next theorem can be thought of as an abstract Ellentuck Theorem.

\begin{thm}\label{CarlsonRamseySpace}
Suppose $\left((A_\xi)_{\xi\in I}, \mathcal{F}\right)$ is an algebra, where $\mathcal{F}$ is a finite family of \emph{nonunary} operations. For each
sort $\vec{e}$, $\mathfrak{R}^{\vec{e}}\left((A_\xi)_{\xi\in I}, \mathcal{F}\right)$ is a topological Ramsey space if and only if
$\mathfrak{R}^{\vec{e}}\left((A_\xi)_{\xi\in I}, \mathcal{F}\right)$ satisfies the following for each $n\in\omega$:
\begin{enumerate}
\item[\emph{($\clubsuit$)}] whenever $\vec{b}\in R^{\vec{e}}$ and $X$ is a set of initial segments of length $n+1$ of $\vec{e}$-sorted sequences ,
    then there exists $\vec{a}\in[n, \vec{b}]^{\vec{e}}$ such that the set consisting of all the initial segments of sequences in $[n,
    \vec{a}]^{\vec{e}}$ of length $n+1$ is either a subset of $X$ or is disjoint from $X$.
\end{enumerate}
\end{thm}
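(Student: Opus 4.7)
The plan is to derive this result by adapting Carlson's Abstract Ellentuck Theorem from \cite{carlson1988some} to the present heterogeneous setting. The forward direction is almost immediate: given $\vec{b}\in R^{\vec{e}}$, $n\in\omega$, and a set $X$ of initial segments of length $n+1$ of $\vec{e}$-sorted sequences, lift $X$ to the clopen subset
$$\hat{X}=\{\vec{c}\in R^{\vec{e}}:\vec{c}\upharpoonright n\in X\}$$
of $R^{\vec{e}}$. This set has the Baire property, so it is Ramsey by hypothesis; applying the Ramsey property at the basic neighborhood $[n,\vec{b}]^{\vec{e}}$ produces $\vec{a}\in[n,\vec{b}]^{\vec{e}}$ such that $[n,\vec{a}]^{\vec{e}}\subseteq\hat{X}$ or $[n,\vec{a}]^{\vec{e}}\cap\hat{X}=\emptyset$, which unpacks directly to ($\clubsuit$).

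For the converse, the plan is to run Carlson's combinatorial-forcing argument driven by ($\clubsuit$). Given an open set $X\subseteq R^{\vec{e}}$ and $\vec{b}\in R^{\vec{e}}$, introduce the usual notions: say $\vec{c}\leq_\mathcal{F}\vec{b}$ \emph{accepts} $X$ at level $n$ when $[n,\vec{c}]^{\vec{e}}\subseteq X$, and \emph{rejects} $X$ at level $n$ when no further refinement accepts. Applying ($\clubsuit$) to the family of length-$(n+1)$ initial segments whose extensions force acceptance, one refines $\vec{b}$ stage by stage so that the accept/reject dichotomy is decided uniformly at each level $n$. A fusion limit $\vec{a}$, obtained by diagonalizing the successive refinements so that $\vec{a}\upharpoonright n$ agrees with the stage-$n$ refinement for every $n$, then gives an $\vec{e}$-sorted reduction of $\vec{b}$ homogeneous for $X$. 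This establishes the Ramsey property for open sets, and by the standard decomposition of a Baire-property set as an open set modulo a meager one, it extends to every set with the Baire property once meager sets are shown to be Ramsey null. The latter is a parallel fusion: given $X=\bigcup_k X_k$ with each $X_k$ nowhere dense, ($\clubsuit$) combined with nowhere-density of $X_k$ is used at stage $k$ to refine away from $X_k$ while preserving $\vec{a}\upharpoonright k$.

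The main technical obstacle is verifying that the fusion limit genuinely lies in $R^{\vec{e}}$ — it must remain $\vec{e}$-sorted and actually be a reduction of the original $\vec{b}$. Here the hypothesis that $\mathcal{F}$ is finite and nonunary is essential: nonunarity guarantees that each refinement strictly consumes coordinates, while finiteness bounds the number of orderly terms of each arity relevant at every stage, making the bookkeeping of the fusion tractable. The heterogeneous adaptation requires only that one restrict throughout to $R^{\vec{e}}\times R^{\vec{e}}$, which is automatic from our definition of $\leq_\mathcal{F}$ on $R^{\vec{e}}$; the sort $\vec{e}$ is therefore preserved through the entire construction without additional effort.
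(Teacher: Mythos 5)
The first thing to note is that the paper does not prove this theorem at all: it is quoted from Carlson \cite{carlson1988some} (his abstract Ellentuck theorem for spaces induced by algebras), merely ``adapted to the current context,'' so the intended justification is to verify that the heterogeneous spaces $\mathfrak{R}^{\vec{e}}$ fall under Carlson's hypotheses and then cite his result. Measured against that, your forward direction is correct and complete: the lift $\hat{X}=\{\vec{c}\in R^{\vec{e}}:\vec{c}\upharpoonright n\in X\}$ is clopen in the natural topology (note $\vec{c}\upharpoonright n$ has length $n+1$ in this paper's convention, matching the statement), hence has the Baire property, and applying Ramseyness at $[n,\vec{b}]^{\vec{e}}$ unpacks exactly to ($\clubsuit$).

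The converse, however, is the entire content of the theorem, and there your proposal is an outline of the right strategy rather than a proof. Three specific gaps. First, the combinatorial forcing step is asserted, not carried out: you never prove the Galvin-type lemma that a sequence rejecting $X$ can be refined so that all of its relevant finite approximations reject, nor the fusion argument showing that the resulting $\vec{a}$ satisfies $[n,\vec{a}]^{\vec{e}}\cap X=\varnothing$ for open $X$; this is where ($\clubsuit$) actually does its work and where the argument can fail if the space lacks the amalgamation properties Carlson isolates as axioms. Second, you correctly flag that the fusion limit must lie in $R^{\vec{e}}$ and be a genuine $\leq_\mathcal{F}$-reduction of $\vec{b}$, but you then resolve this by assertion: ``nonunarity guarantees that each refinement strictly consumes coordinates'' is not accurate as stated (orderly terms include the identity functions, so a reduction need not consume more than one coordinate per output term), and the actual role of finiteness and nonunarity in Carlson's proof is to make the set of length-$(n+1)$ approximations obtainable from a fixed finite segment finite and to validate the depth/amalgamation axioms --- none of which is checked here. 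Third, the claim that meager sets are Ramsey null is dispatched in one sentence; the passage from nowhere dense to Ramsey null again requires the forcing machinery, not just ($\clubsuit$) applied ``combined with nowhere-density.'' To make this rigorous you should either reproduce Carlson's proof in full for the heterogeneous setting or, as the paper implicitly does, verify Carlson's axioms for $\left(R^{\vec{e}},\leq_\mathcal{F}\right)$ and cite his theorem.
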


\begin{thm}\label{CarlsonMain}
Under the hypothesis of Theorem \ref{CarlsonRamseySpace}, $\mathfrak{R}^{\vec{e}}\left((A_\xi)_{\xi\in I}, \mathcal{F}\right)$ is a topological Ramsey
space if and only if, for each $m\in\omega$, $\mathfrak{R}^{\vec{e}-m}\left((A_\xi)_{\xi\in I}, \mathcal{F}\right)$ satisfies the special case of
$\clubsuit$ with $n=0$.
\end{thm}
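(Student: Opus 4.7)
The plan is to deduce Theorem \ref{CarlsonMain} directly from Theorem \ref{CarlsonRamseySpace} by proving, for each $n\in\omega$, the per-$n$ equivalence: \emph{($\clubsuit$) at sort $\vec{e}$ for the given $n$ holds if and only if the special case $n=0$ of ($\clubsuit$) at sort $\vec{e}-n$ holds}. Once this is in hand the theorem falls out immediately, for by Theorem \ref{CarlsonRamseySpace}, $\mathfrak{R}^{\vec{e}}$ is a topological Ramsey space iff ($\clubsuit$) holds at sort $\vec{e}$ for every $n\in\omega$, which by the per-$n$ equivalence is the same as the $n=0$ case of ($\clubsuit$) holding at sort $\vec{e}-m$ for every $m\in\omega$ (relabel $n$ as $m$).

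The per-$n$ equivalence is proved through the natural bijection $\vec{a}\leftrightarrow(\vec{a}\upharpoonright n,\vec{a}-n)$ between $\vec{e}$-sorted sequences and pairs consisting of an initial $\vec{e}\upharpoonright n$-sorted tuple and a tail $(\vec{e}-n)$-sorted sequence. For the direction assuming $\clubsuit_{0}$ at $\vec{e}-n$, take $\vec{b}\in R^{\vec{e}}$ and a set $X$ of length-$(n+1)$ initial segments of $\vec{e}$-sorted sequences, set $\vec{c}:=\vec{b}-n$ and $X':=\{x\in A_{\vec{e}(n)}:\vec{b}\upharpoonright n*\langle x\rangle\in X\}$, apply $\clubsuit_{0}$ at $\vec{e}-n$ to $\vec{c}$ and $X'$ to produce a $(\vec{e}-n)$-sorted $\vec{d}\leq_\mathcal{F}\vec{c}$ with $\FR^{\vec{e}-n}_{\mathcal{F}}(\vec{d})$ either contained in or disjoint from $X'$, and then let $\vec{a}:=\vec{b}\upharpoonright n*\vec{d}$. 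The identity orderly terms handle the first $n$ coordinates of the decomposition of $\vec{a}$, while the orderly-term decomposition of $\vec{d}$ over $\vec{c}$ handles the rest, so $\vec{a}\leq_{\mathcal{F}}\vec{b}$ and hence $\vec{a}\in[n,\vec{b}]^{\vec{e}}$. Since the length-$(n+1)$ initial segments of sequences in $[n,\vec{a}]^{\vec{e}}$ are precisely the $\vec{a}\upharpoonright n*\langle y\rangle$ for $y\in\FR^{\vec{e}-n}_{\mathcal{F}}(\vec{d})$, the required homogeneity transfers from $X'$ back to $X$.

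For the converse direction, start with $\vec{c}\in R^{\vec{e}-n}$ and $X'\subseteq A_{\vec{e}(n)}$, prepend any $\vec{e}\upharpoonright n$-sorted tuple (which exists, as otherwise both sides of the equivalence are vacuous) to form $\vec{b}\in R^{\vec{e}}$ with $\vec{b}-n=\vec{c}$, set $X:=\{\vec{b}\upharpoonright n*\langle x\rangle:x\in X'\}$, apply $\clubsuit_{n}$ at $\vec{e}$ to $\vec{b}$ and $X$ to obtain $\vec{a}\in[n,\vec{b}]^{\vec{e}}$, and take $\vec{d}:=\vec{a}-n$. The one technical point that I expect to require the most care is the verification that $\vec{d}\leq_{\mathcal{F}}\vec{c}$: since $\vec{a}\leq_{\mathcal{F}}\vec{b}$ with $\vec{a}(j)=\vec{b}(j)$ for $j<n$, the order-preserving and disjointness constraints on the witnessing finite subsequences of $\vec{b}$ in Definition \ref{reduction} force the decomposition of $\vec{a}$ on coordinates $j<n$ to use the singleton $\langle\vec{b}(j)\rangle$ with the identity term, after which the remaining witnessing subsequences for coordinates $j\geq n$ must lie entirely within $\vec{b}-n=\vec{c}$. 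Everything else in the argument is bookkeeping through the bijection.
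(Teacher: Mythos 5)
The paper does not actually prove this theorem: both Theorem \ref{CarlsonRamseySpace} and Theorem \ref{CarlsonMain} are imported from Carlson's paper ``adapted to the current context,'' with no argument supplied. So the relevant question is whether your derivation of Theorem \ref{CarlsonMain} \emph{from} Theorem \ref{CarlsonRamseySpace} is sound, and in outline it is: the per-$n$ equivalence ``$\clubsuit$ at $\vec{e}$ for $n$ iff $\clubsuit$ with $n=0$ at $\vec{e}-n$'' is the right reduction, and your bookkeeping through the bijection $\vec{a}\leftrightarrow(\vec{a}\upharpoonright n,\vec{a}-n)$, together with the identification of the length-$(n+1)$ initial segments of $[n,\vec{a}]^{\vec{e}}$ with $\{\vec{a}\upharpoonright n\ast\langle y\rangle:y\in\FR^{\vec{e}-n}_{\mathcal{F}}(\vec{a}-n)\}$, gives the theorem. (You are also implicitly correcting the paper's stated convention that $\vec{e}\upharpoonright n$ has length $n+1$, which would trivialize $\clubsuit$; your reading is the intended one.)

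The one step you flag as delicate is justified incorrectly, though the conclusion you need is true. It is \emph{not} the case that $\vec{a}(j)=\vec{b}(j)$ for $j<n$ forces the witnessing decomposition on those coordinates to be the identity term applied to $\langle\vec{b}(j)\rangle$: nothing in Definition \ref{reduction} prevents $\vec{a}(0)$, say, from being witnessed as $f_0(\vec{b}(5),\vec{b}(7))$ if that value happens to equal $\vec{b}(0)$. The correct argument is a counting one: every orderly term has arity at least $1$ (the identities are unary, the members of $\mathcal{F}$ are nonunary, and orderly composition preserves positive arity), so the witnessing blocks $\vec{b}_0,\dots,\vec{b}_{n-1}$ jointly occupy at least $n$ positions of $\vec{b}$; hence all later blocks sit in positions $\geq n$, i.e., inside $\vec{b}-n$, which gives $\vec{a}-n\leq_{\mathcal{F}}\vec{b}-n$. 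Note that the same counting fact is also needed (and left implicit) in your forward direction, for the inclusion asserting that every $\vec{c}\in[n,\vec{a}]^{\vec{e}}$ has $\vec{c}(n)\in\FR^{\vec{e}-n}_{\mathcal{F}}(\vec{a}-n)$: one must check $\vec{c}-n\leq_{\mathcal{F}}\vec{a}-n$ there too. Finally, your vacuity remark about prepending an $\vec{e}\upharpoonright n$-sorted tuple is not quite right (if some $A_{\vec{e}(i)}$ with $i<n$ is empty while the tail phyla are not, then $R^{\vec{e}}=\varnothing$ but $R^{\vec{e}-n}\neq\varnothing$, and the two sides are not both vacuous); this pathology already afflicts the theorem as stated and is standardly excluded by assuming all phyla nonempty, but you should say so rather than claim symmetry of vacuity.
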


The preceding theorem is the key to relating topological Ramsey spaces to Ramsey algebra. When paraphrased, it becomes a statement in the language of
Ramsey algebra.

\begin{cor}\label{concuse}
Under the hypothesis of Theorem \ref{CarlsonRamseySpace}, $\mathfrak{R}^{\vec{e}}\left((A_\xi)_{\xi\in I}, \mathcal{F}\right)$ is a topological Ramsey
space if and only if $((A_\xi)_{\xi\in I}, \mathcal{F})$ is an $(\vec{e}-m)$-Ramsey algebra for each $m\in\omega$.

In particular, if $((A_\xi)_{\xi\in I}, \mathcal{F})$ is homogeneous, then $\mathfrak{R}^{\vec{e}}\left((A_\xi)_{\xi\in I}, \mathcal{F}\right)$ is a
topological Ramsey space if and only if $((A_\xi)_{\xi\in I}, \mathcal{F})$ is a Ramsey algebra.
\end{cor}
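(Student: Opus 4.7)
The plan is to view this corollary as essentially a translation of Theorem \ref{CarlsonMain} into the vocabulary of Ramsey algebras. By Theorem \ref{CarlsonMain}, $\mathfrak{R}^{\vec{e}}\left((A_\xi)_{\xi\in I}, \mathcal{F}\right)$ is a topological Ramsey space if and only if, for every $m \in \omega$, the space $\mathfrak{R}^{\vec{e}-m}\left((A_\xi)_{\xi\in I}, \mathcal{F}\right)$ satisfies the special case $n = 0$ of ($\clubsuit$). So the whole proof reduces to showing that this $n = 0$ instance of ($\clubsuit$) for the sort $\vec{e}-m$ is literally the statement that $((A_\xi)_{\xi\in I}, \mathcal{F})$ is an $(\vec{e}-m)$-Ramsey algebra in the sense of Definition \ref{RA}. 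Hence I would not invoke any new combinatorial machinery; the argument is a careful unpacking of definitions.

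Concretely, for a fixed sort $\vec{d} = \vec{e} - m$, I would first observe that an initial segment of length $n+1 = 1$ of a $\vec{d}$-sorted sequence is nothing but a singleton $\langle x \rangle$ with $x \in A_{\vec{d}(0)}$, so any set $X$ of such segments corresponds canonically to a subset $X' \subseteq A_{\vec{d}(0)}$. Next, since the constraint $\vec{a} \upharpoonright 0 = \vec{b} \upharpoonright 0$ is vacuous, we have $[0, \vec{b}]^{\vec{d}} = \{\vec{a} \in R^{\vec{d}} : \vec{a} \leq_\mathcal{F} \vec{b}\}$; and the set of length-$1$ initial segments of sequences in $[0, \vec{a}]^{\vec{d}}$ is exactly
\begin{equation*}
\{\langle \vec{c}(0)\rangle : \vec{c} \in R^{\vec{d}},\, \vec{c} \leq_\mathcal{F} \vec{a}\},
\end{equation*}
which under the identification above becomes $\FR^{\vec{d}}_\mathcal{F}(\vec{a})$ by Definition \ref{FR}. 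Substituting these identifications into ($\clubsuit$) with $n = 0$ gives: for every $\vec{b} \in R^{\vec{d}}$ and every $X' \subseteq A_{\vec{d}(0)}$, there exists $\vec{a} \leq_\mathcal{F} \vec{b}$ which is $\vec{d}$-sorted and such that $\FR^{\vec{d}}_\mathcal{F}(\vec{a})$ is either contained in $X'$ or disjoint from $X'$. This is word-for-word Definition \ref{RA} for the sort $\vec{d} = \vec{e}-m$.

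For the homogeneous clause, I would note that when $I$ is a singleton $\{\xi_0\}$, the only available sort is the constant sequence $\vec{e} = \langle \xi_0, \xi_0, \ldots\rangle$, which satisfies $\vec{e} - m = \vec{e}$ for every $m \in \omega$. Thus the family of conditions "$((A_\xi)_{\xi\in I},\mathcal{F})$ is $(\vec{e}-m)$-Ramsey for each $m$" collapses to the single assertion that the algebra is a Ramsey algebra, and the reference to $\vec{e}$ may be suppressed.

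There is no genuine obstacle here: the entire content of the corollary lies in Theorem \ref{CarlsonMain}, and the only thing to be careful about is matching notation, in particular recognizing that the $n = 0$ neighborhood $[0, \vec{b}]^{\vec{d}}$ imposes no constraint on initial segments and that the set of length-$1$ initial segments of its members is exactly the set $\FR^{\vec{d}}_\mathcal{F}(\vec{a})$ defined via reductions. Once this identification is in place, both directions of the equivalence follow simultaneously.
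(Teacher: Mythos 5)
Your proposal is correct and is essentially the paper's own (implicit) argument: the corollary is presented as a direct paraphrase of Theorem \ref{CarlsonMain}, obtained exactly as you do by identifying length-one initial segments of $\vec{d}$-sorted sequences with elements of $A_{\vec{d}(0)}$ and the length-one initial segments of members of $[0,\vec{a}]^{\vec{d}}$ with $\FR^{\vec{d}}_{\mathcal{F}}(\vec{a})$, plus the observation that $\vec{e}-m=\vec{e}$ in the homogeneous case. The only point worth noting is that your reading of $[0,\vec{b}]^{\vec{d}}$ as imposing no constraint is the intended (standard Carlson) convention, even though the paper's stated definition of $\vec{a}\upharpoonright n$ as $\langle\vec{a}(0),\ldots,\vec{a}(n)\rangle$ would literally force $\vec{a}(0)=\vec{b}(0)$ and trivialize the $n=0$ case; your interpretation is the one that makes the corollary non-vacuous.
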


\section{Some Elementary Results}
Given any algebra $\mathcal{A}$ with the indexing set $I$, we single out the following class of sorts:
\begin{equation*}
\Omega=\{\vec{e}\in{^\omega}I:\text{if}\;\vec{e}(i)=\xi\;\text{for some}\;i,\;\text{then}\;|\{i:\vec{e}(i)=\xi\}|=\aleph_0\}.
\end{equation*}

We will mainly be studying $\vec{e}$-Ramsey algebras for $\vec{e}$ belonging in this class. The reason for this choice is that sequences of sorts
belonging in $\Omega$ bear resemblance to properties familiar from homogeneous Ramsey algebras. For example, if $\vec{e}\in\Omega$, then
\begin{equation}\label{Omegafr}
c\in\fr_\mathcal{F}^{\vec{e}}(\vec{b})\Longleftrightarrow c=f(\tau)
\end{equation}
for some $f\in\ot(\mathcal{F})$ and some finite subsequence $\tau$ of $\vec{b}$. Compare this to Eq. \ref{homFR}. The reader can go through the
definition of a reduction to verify Eqv. \ref{Omegafr}.

As the first term of a sort is of critical importance in reductions, we break the set $\Omega$ down as follows: For each $\xi\in I$,
\begin{equation*}
\Omega_\xi=\{\vec{e}\in\Omega:\vec{e}(0)=\xi\}.
\end{equation*}

\begin{lem}\label{frsubsets}
Suppose that $\vec{e}\in\Omega$, $\vec{e}'$ is any sort such that $\vec{e}\:'(0)=\vec{e}(0)$, $\vec{a}$ is $\vec{e}$-sorted, $\vec{a}\:'$ is
$\vec{e}\:'$-sorted, and $\vec{a}\:'\leq_\mathcal{F}\vec{a}$. Then,
$\fr_\mathcal{F}^{\vec{e}\:'}(\vec{a}\:')\subseteq\fr_\mathcal{F}^{\vec{e}}(\vec{a})$.
\end{lem}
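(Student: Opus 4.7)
The plan is to reduce everything to Eqv. \ref{Omegafr} by exploiting transitivity of the preorder $\leq_\mathcal{F}$. Informally, any witness for $c \in \fr_\mathcal{F}^{\vec{e}\:'}(\vec{a}\:')$ can be chained with the reduction $\vec{a}\:' \leq_\mathcal{F} \vec{a}$ to yield a single orderly-term expression for $c$ in terms of a finite subsequence of $\vec{a}$. Eqv. \ref{Omegafr} then repackages such an expression as membership in $\fr_\mathcal{F}^{\vec{e}}(\vec{a})$, and this is where the hypothesis $\vec{e} \in \Omega$ is used.

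Explicitly, I would take an arbitrary $c \in \fr_\mathcal{F}^{\vec{e}\:'}(\vec{a}\:')$. By Definition \ref{FR}, there is an $\vec{e}\:'$-sorted sequence $\vec{d}$ with $\vec{d} \leq_\mathcal{F} \vec{a}\:'$ and $\vec{d}(0) = c$. Transitivity of $\leq_\mathcal{F}$ applied to $\vec{d} \leq_\mathcal{F} \vec{a}\:' \leq_\mathcal{F} \vec{a}$ yields $\vec{d} \leq_\mathcal{F} \vec{a}$. Unpacking Definition \ref{reduction} for this last relation produces an orderly term $F \in \ot(\mathcal{F})$ and a finite subsequence $\tau$ of $\vec{a}$ with $c = \vec{d}(0) = F(\tau)$. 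Since $\vec{d}$ is $\vec{e}\:'$-sorted and $\vec{e}\:'(0) = \vec{e}(0)$, we have $c \in A_{\vec{e}\:'(0)} = A_{\vec{e}(0)}$, so the codomain of $F$ is $A_{\vec{e}(0)}$. Invoking Eqv. \ref{Omegafr}, which applies because $\vec{e} \in \Omega$, we conclude $c \in \fr_\mathcal{F}^{\vec{e}}(\vec{a})$.

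The main delicate point is the final step, namely the implication from ``$c$ equals an orderly term applied to a finite subsequence of $\vec{a}$ with codomain $A_{\vec{e}(0)}$'' to ``$c$ is the first entry of some $\vec{e}$-sorted reduction of $\vec{a}$.'' The hypothesis $\vec{e} \in \Omega$ is precisely what makes this implication go through: since each sort appearing in $\vec{e}$ occurs infinitely often, after using the finitely many entries of $\vec{a}$ required to represent $c$ one still has infinitely many entries of $\vec{a}$ of each needed sort available to build out the tail of an $\vec{e}$-sorted reduction (for instance by applying identity orderly terms to appropriate fresh entries of $\vec{a}$). Since this is already the content of Eqv. \ref{Omegafr}, I intend to cite that equivalence as a black box rather than redo the extension argument. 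Note that no analogous hypothesis on $\vec{e}\:'$ is needed, since the forward direction of Eqv. \ref{Omegafr} for $\vec{e}\:'$ is not invoked — the orderly term $F$ and the subsequence $\tau$ are produced directly from Definition \ref{reduction} applied to the chained reduction $\vec{d} \leq_\mathcal{F} \vec{a}$.
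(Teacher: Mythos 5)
Your proof is correct and follows essentially the same route as the paper's: pass from a witness $\vec{d}\leq_\mathcal{F}\vec{a}\:'$ to $\vec{d}\leq_\mathcal{F}\vec{a}$ by transitivity and then invoke Equivalence \ref{Omegafr}, which is where $\vec{e}\in\Omega$ is used. Your extra care in noting that $c\in A_{\vec{e}(0)}$ (so that the backward direction of the equivalence actually applies) is a detail the paper leaves implicit, but it is the same argument.
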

\begin{proof}
Let $\vec{c}\:'\leq_\mathcal{F}\vec{a}\:'$ be $\vec{e}\:'$-sorted; we want to show that $\vec{c}\:'(0)$ is also a member of
$\fr_\mathcal{F}^{\vec{e}}(\vec{a})$. To do this, note that $\vec{c}\;'\leq_\mathcal{F}\vec{a}$ by the transitivity of $\leq_\mathcal{F}$. Now, apply
Eq. \ref{Omegafr} and we see that $\vec{c}\:'(0)$ is indeed a member of $\fr_\mathcal{F}^{\vec{e}}(\vec{a})$.
\end{proof}

A classification of homogeneous algebras all of whose operations are unary can be found in \cite{teh2016ramsey}. We state it here for reference
purposes:

\begin{thm}\label{unaryS}
If $\mathcal{F}$ is a collection of unary operations, then $(A, \mathcal{F})$ is a Ramsey algebra if and only if every element of $A$ can be sent into
an element of $S=\{a\in A:f(a)=a\;\text{for all}\;f\in\mathcal{F}\}$ by finitely many applications of the members of $\mathcal{F}$.
\end{thm}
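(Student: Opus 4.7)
My plan is to prove the two directions separately, treating the backward (sufficiency) direction first because it is the shorter of the two.

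For the backward direction, assume every $a\in A$ can be sent into $S$ by finitely many applications of members of $\mathcal{F}$. Given a sequence $\vec{b}$ and a colouring $X\subseteq A$, I would first use the hypothesis to pick, for each $n\in\omega$, an orderly term $h_n\in\ot(\mathcal{F})$ with $s_n:=h_n(\vec{b}(n))\in S$. Setting $\vec{a}\,'(n)=s_n$ produces a reduction $\vec{a}\,'\leq_\mathcal{F}\vec{b}$ all of whose entries lie in $S$. Since every element of $S$ is fixed by every member of $\mathcal{F}$ and hence by every orderly term, Eq.~\ref{homFR} gives $\fr_\mathcal{F}(\vec{a}\,')=\{s_n:n\in\omega\}$. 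An infinite pigeonhole then furnishes an infinite $N\subseteq\omega$ such that $\{s_n:n\in N\}$ is entirely contained in $X$ or entirely disjoint from $X$, and the subsequence of $\vec{a}\,'$ indexed by $N$ is still a reduction of $\vec{b}$ that is homogeneous for $X$.

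For the forward direction I would argue the contrapositive. Suppose some $a\in A$ admits no orderly term sending it into $S$, so that $O(a):=\{h(a):h\in\ot(\mathcal{F})\}$ is disjoint from $S$, and test the Ramsey property against the constant sequence $\vec{b}=(a,a,a,\ldots)$. Because $\mathcal{F}$ consists of unary operations, every orderly term is unary, so each reduction $\vec{c}\leq_\mathcal{F}\vec{b}$ takes the form $\vec{c}(n)=f_n(a)$ for some $f_n\in\ot(\mathcal{F})$, and Eq.~\ref{homFR} yields
\[
\fr_\mathcal{F}(\vec{c})=\bigcup_{n\in\omega}O(\vec{c}(n)),
\]
a subset of $O(a)$ closed under $\ot(\mathcal{F})$ which contains the orbit $O(\vec{c}(0))$. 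It therefore suffices to exhibit a colouring $X\subseteq O(a)$ such that every orbit $O(b)$ with $b\in O(a)$ meets both $X$ and its complement: then $\fr_\mathcal{F}(\vec{c})\supseteq O(\vec{c}(0))$ is bichromatic for every reduction $\vec{c}$, contradicting Ramseyness.

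The main obstacle is the construction of this $X$. The hypothesis $O(a)\cap S=\emptyset$ supplies the needed combinatorial base: for each $b\in O(a)$ some $g\in\mathcal{F}$ moves $b$, whence $|O(b)|\geq 2$, so every non-empty $\ot(\mathcal{F})$-closed subset of $O(a)$ has size at least two. My approach is to case-split on the reachability digraph of $O(a)$. If it contains a sink strongly connected component $C$, which necessarily has $|C|\geq 2$, then for any $b\in C$ every orbit $O(c)$ with $c\in O(b)$ equals $C$, so any proper nonempty $X\cap C$ splits all of them. Otherwise every element of $O(a)$ admits an infinite strictly descending chain of descendants, and one builds $X$ by colouring such a chain from $a$ in alternating parity and extending carefully to off-chain elements, as in the guiding case $(\omega,\{n\mapsto n+1\})$. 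Gluing these two constructions uniformly across $O(a)$ via a transfinite recursion along a well-ordering that uses $|O(b)|\geq 2$ to keep the bichromaticity obligations satisfiable at every step is the only genuinely non-routine step, and I expect handling orbits branching off the chain in the sink-free case to be where most of the care is required.
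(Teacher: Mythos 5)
Your backward direction is fine and matches the standard argument: push every term of $\vec{b}$ into the fixed-point set $S$, observe that $\fr_\mathcal{F}$ of the resulting reduction is just its set of terms, and pigeonhole an infinite monochromatic subsequence. Your reduction of the forward direction is also valid: with $\vec{b}=(a,a,\ldots)$ one has $\fr_\mathcal{F}(\vec{c})\supseteq O(\vec{c}(0))$ with $\vec{c}(0)\in O(a)$, so it does suffice to produce $X\subseteq O(a)$ meeting every orbit $O(b)$, $b\in O(a)$, in both colours.

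The gap is that you never actually construct this $X$, and the mechanism you sketch for the hard case does not obviously work. Outside the sink components you propose ``alternating parity along a chain, extending carefully to off-chain elements, glued by a transfinite recursion that uses $|O(b)|\geq 2$ to keep the obligations satisfiable.'' But the obligations are not local to one chain: each element lies in many orbits $O(b)$, and in a greedy transfinite recursion an orbit $O(b)$ (which may be merely countably infinite) can have all of its elements already committed to a single colour by the time its turn comes, at which point $|O(b)|\geq 2$ is of no help. So the ``only genuinely non-routine step'' is exactly the step that is missing, and it needs a real idea, not bookkeeping. Two ways to supply it. (i) The route the paper takes for the heterogeneous analogue (Theorem \ref{premprop}(2)), following Theorem 4.2 of \cite{teh2016ramsey}: build a fixed-point-free map $T$ on $A\cup\{\alpha\}$, invoke Katet\v{o}v's three-set lemma (Lemma \ref{katetov}) to get a partition into \emph{three} pieces each moved off itself, and use Remark \ref{finitepartition} to homogenize this three-piece partition; the contradiction is then immediate because a specific $f\in\mathcal{F}$ moves $\vec{a}(0)$ out of its piece. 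This sidesteps your two-colourability question entirely (and three colours cannot in general be reduced to two for the ``$T(x)$ changes colour'' requirement -- think of a $3$-cycle). (ii) If you insist on two colours, your weaker requirement (orbits bichromatic, rather than each step colour-changing) is in fact achievable, but the clean argument is different from your sketch: first split every non-singleton strongly connected component of the reachability digraph between the two colours; on the remainder the reachability relation is a partial order with no minimal elements, so one can choose $g(x)$ strictly below $x$, note that $g$ has no cycles, hence the graph with edges $\{x,g(x)\}$ is a forest and is properly $2$-colourable, which makes every remaining $O(x)\supseteq\{x,g(x)\}$ bichromatic. Either repair closes the argument; as written, your proof does not.
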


Theorem \ref{unaryS}'s heterogeneous analogue is given in the next theorem, but before then we will content ourselves with two lemmas. In Lemma
\ref{disjointops} and Theorem \ref{premprop} below, we consider algebras whose family $\mathcal{F}$ of operations are disjoint in the following sense:
$\mathcal{F}=\bigcup_{\xi\in I}\mathcal{G}_\xi$, where $\mathcal{G}_\xi=\{f\in\mathcal{F}:\dom(f)=A_\xi^n\;\text{for
some}\;n\in\omega\;\text{and}\;\cod(f)=A_\xi\}$ for each $\xi\in I$.

\begin{lem}\label{disjointops}
Suppose $\mathcal{A}=\left(\{A_\xi\}_{\xi\in I}, \mathcal{F}\right)$ is an algebra such that $\mathcal{F}=\bigcup_{\xi\in I}\mathcal{G}_\xi$. Then:
\begin{enumerate}
\item $\ot(\mathcal{F})=\bigcup_{\xi\in I}\ot\left(\mathcal{G}_\xi\right)$ and this is a \emph{disjoint} union.
\item For each $\vec{e}\in\Omega_\eta$, if $\vec{b}$ is $\vec{e}$-sorted and $\vec{\beta}$ is the subsequence of $\vec{b}$ consisting of members of
    $A_\eta$, then $\fr_\mathcal{F}^{\vec{e}}(\vec{b})=\fr_{\mathcal{G}_\eta}^{\vec{e}}(\vec{b})=\fr_{\mathcal{G}_\eta}(\vec{\beta})$.
\item If $\vec{e}\not\in\Omega$ and $\vec{b}$ is $\vec{e}$-sorted, then each $\vec{e}$-sorted $\vec{a}\leq_\mathcal{F}\vec{b}$ has the property that
    $\vec{a}(0)$ is the image of $\vec{b}(0)$ under a composition of some unary operations in $\mathcal{G}_{\vec{e}(0)}$. In particular, if
    $\mathcal{G}_{\vec{e}(0)}$ does not contain unary operations on $A_{\vec{e}(0)}$, then
    $\fr_\mathcal{F}^{\vec{e}}(\vec{b})=\left\{\vec{b}(0)\right\}$.
\end{enumerate}
\end{lem}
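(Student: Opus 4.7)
The plan is to handle parts (1) and (2) first, since they follow fairly directly from the disjointness of phyla, and then invest the bulk of the work in a counting argument for part (3).

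For part (1), I would proceed by induction on the construction of orderly terms. The base cases are clear: $\id_{A_\xi}$ lies in $\ot(\mathcal{G}_\xi)$, and each $f\in\mathcal{F}$ lies in the unique $\mathcal{G}_\xi$ determined by its codomain. For the inductive step, any orderly composition $f=g(h_1,\ldots,h_k)$ with $g\in\mathcal{G}_\xi$ has $\dom(g)=A_\xi^k$, so each $h_j$ must have codomain $A_\xi$, placing it in $\ot(\mathcal{G}_\xi)$ by the induction hypothesis and hence $f$ as well. Disjointness of the union is immediate: every member of $\ot(\mathcal{G}_\xi)$ has codomain $A_\xi$, and the phyla are pairwise disjoint by the standing assumption.

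For part (2), I would apply the characterization in Eq.~\ref{Omegafr}: $c\in\fr_\mathcal{F}^{\vec{e}}(\vec{b})$ precisely when $c=f(\tau)$ for some $f\in\ot(\mathcal{F})$ and finite subsequence $\tau$ of $\vec{b}$. Because $c\in A_\eta$, part (1) forces $f\in\ot(\mathcal{G}_\eta)$, and since such $f$ only accepts $A_\eta$-inputs, $\tau$ is actually a subsequence of $\vec{\beta}$, giving $c\in\fr_{\mathcal{G}_\eta}(\vec{\beta})$. The reverse inclusion is trivial from $\vec{\beta}$ being a subsequence of $\vec{b}$ and $\ot(\mathcal{G}_\eta)\subseteq\ot(\mathcal{F})$. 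The middle quantity $\fr_{\mathcal{G}_\eta}^{\vec{e}}(\vec{b})$ slots between the two by the same codomain consideration, as only terms in $\ot(\mathcal{G}_\eta)$ can produce an element of $A_\eta$.

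For part (3), the heart of the argument, take an $\vec{e}$-sorted $\vec{a}\leq_\mathcal{F}\vec{b}$ witnessed by orderly terms $f_j$ and finite subsequences $\bar{b}_j$. Part (1) places each $f_j\in\ot(\mathcal{G}_{\vec{e}(j)})$, so each $\bar{b}_j$ is a nonempty finite sequence of $A_{\vec{e}(j)}$-elements from $\vec{b}$ (using the paper's convention that orderly terms have positive arity). Since $\vec{e}\notin\Omega$, fix a sort $\zeta$ with finite nonzero multiplicity in $\vec{e}$ and let $j^\ast$ be its first occurrence. Counting elements of sort $\zeta$---the number of sort-$\zeta$ positions in $\vec{a}$ equals the number of $A_\zeta$-elements in $\vec{b}$, each of the former consuming at least one of the latter---forces each such $\bar{b}_j$ to be a singleton, and the order-preserving matching pins $\bar{b}_{j^\ast}=(\vec{b}(j^\ast))$. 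The concatenation constraint then pushes $\bar{b}_0\ast\cdots\ast\bar{b}_{j^\ast-1}$ into positions $<j^\ast$ of $\vec{b}$; a second pigeonhole plus the uniqueness of the order-preserving self-map of $\{0,\ldots,j^\ast-1\}$ forces $\bar{b}_j=(\vec{b}(j))$ for every $j<j^\ast$. In particular $\bar{b}_0=(\vec{b}(0))$, so $\vec{a}(0)=f_0(\vec{b}(0))$ with $f_0$ a unary orderly term in $\ot(\mathcal{G}_{\vec{e}(0)})$; a routine unraveling under positive arity exhibits $f_0$ as a composition of unary operations in $\mathcal{G}_{\vec{e}(0)}$. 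The closing clause is then immediate: with no unary operations available in $\mathcal{G}_{\vec{e}(0)}$, the only unary orderly term is $\id_{A_{\vec{e}(0)}}$, giving $\vec{a}(0)=\vec{b}(0)$ and hence $\fr_\mathcal{F}^{\vec{e}}(\vec{b})=\{\vec{b}(0)\}$.

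The delicate step is the double pigeonhole in part (3), and I must also check the degenerate case $j^\ast=0$ (where the second pigeonhole is vacuous), in which $\bar{b}_0=(\vec{b}(0))$ already emerges from the sort-$\zeta$ analysis alone.
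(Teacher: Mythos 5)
Your proposal is correct and follows essentially the same route as the paper: parts (1) and (2) by induction on the generation of orderly terms plus the codomain/disjointness observation (which the paper leaves as "definition chasing"), and part (3) by a pigeonhole count on a sort of finite multiplicity forcing the relevant segments $\bar{b}_j$ to be singletons matched order-preservingly to positions of $\vec{b}$. The only cosmetic difference is that you anchor the count at the \emph{first} occurrence $j^\ast$ of the finitely-occurring sort while the paper uses the \emph{last} occurrence $N$ (thereby pinning $\vec{a}(i)=f_i(\vec{b}(i))$ for all $i\leq N$ rather than just $i\leq j^\ast$); both yield the needed conclusion at $i=0$, and your explicit flagging of the positive-arity convention and the $j^\ast=0$ degenerate case is a welcome extra precision.
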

\begin{proof} (1) \& (2) Apart from tedious definition chasing and an appeal to Eqv. \ref{Omegafr}, these two parts of the lemma are intuitive.

(3) Let $\eta\in I$ be an index that occurs only finitely often in $\vec{e}$ and let $N\in\omega$ be such that $\vec{e}(N)=\eta$ and
$\vec{e}(i)\neq\eta$ for all $i>N$. Then, since $\mathcal{F}$ is a disjoint union of the
$\mathcal{G}_\xi$'s, we see from Part 1 that, if $\vec{a}\leq_\mathcal{F}\vec{b}$, then $\vec{a}(N)$ must be the image of $\vec{b}(N)$
under a unary $f\in\ot(\mathcal{G}_\eta)$, i.e. $\vec{a}(N)$ is the image
of a composition of unary operations of $\mathcal{G}_\eta$. In fact, $\vec{a}(i)$ must be an image of
$\vec{b}(i)$ under a unary orderly term of $\mathcal{G}_{\vec{e}(i)}$ for each $i\in\{0, \ldots, N\}$. In particular, if
$\mathcal{G}_{\vec{e}(0)}$ does not contain unary operations on $A_{\vec{e}(0)}$, then $\vec{a}(0)=\vec{b}(0)$ since the only
unary operation in $\mathcal{G}_{\vec{e}(0)}$ is $\id_{A_{\vec{e}(0)}}$; therefore, $\fr_\mathcal{F}^{\vec{e}}(\vec{b})=\left\{\vec{b}(0)\right\}$.
\end{proof}

If $T$ is an operation on a set $A$ and $X\subseteq A$, then $T[X]$ will denote the image of $X$ under $T$, i.e. $T[X]=\{T(x):x\in X\}$.

\begin{lem}[Katet\v{o}v \cite{katetov67}]\label{katetov}
Let $A$ be a set and suppose that $T:A\to A$ does not have fixed points. Then, there exists a partition $A=P_1\cup P_2\cup P_3$ such that $T[P_i]\cap
P_i=\varnothing$ for each $i=1, 2, 3$.
\end{lem}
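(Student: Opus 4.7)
The plan is to recast the statement as a graph three-coloring problem. Let $G$ be the undirected graph on vertex set $A$ with edge set $\{\{x, T(x)\} : x \in A\}$; since $T$ has no fixed points, each such pair is a genuine edge. A partition $A = P_1 \cup P_2 \cup P_3$ with $T[P_i] \cap P_i = \varnothing$ for each $i$ is precisely a proper $3$-coloring of $G$, so it suffices to exhibit one.

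First I would show that $G$ is a \emph{pseudoforest}, i.e., each connected component contains at most one cycle. For a component with vertex set $C$, the assignment $x \mapsto \{x, T(x)\}$ maps $C$ onto the set of edges of $G$ lying in $C$, so there are at most $|C|$ such edges; a connected graph with $n$ vertices and at most $n$ edges contains at most one cycle.

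Working component by component and invoking the Axiom of Choice to assemble the colorings globally, I would color each $C$ as follows. If $C$ is acyclic, it is a tree and admits a proper $2$-coloring: well-order $C$ from a chosen root and extend recursively so that each new vertex receives a color different from its unique already-colored neighbor. If $C$ contains its unique cycle $O$, first properly color $O$---an even cycle with two colors, an odd cycle (necessarily of length at least $3$, since $T$ has no fixed points) with three colors by alternating two colors on all but one vertex and assigning the third color to the remaining one. Then extend by BFS from $O$: each $x \in C \setminus O$ at BFS-distance $d \geq 1$ from $O$ has, by the pseudoforest property, a unique neighbor at distance $d-1$ (two such neighbors would yield a second cycle through $x$), so we color $x$ to differ from this already-colored predecessor.

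The principal obstacle is justifying the pseudoforest structure, which underpins both the one-cycle-per-component analysis and the unique-predecessor property in the BFS step; both follow from the elementary edge count above. In the uncountable setting, the BFS procedure is formalized by well-ordering each component consistently with distance to $O$ (or to the chosen root in the acyclic case) and applying transfinite recursion; at each stage the new vertex has at most one colored neighbor, so at least two colors remain available. Taking $P_i$ to be the union of the $i$-th color classes across all components then yields the required partition.
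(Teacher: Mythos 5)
The paper does not actually prove this lemma --- it is quoted from Katet\v{o}v's paper and used as a black box in the proof of Theorem \ref{premprop} --- so there is no in-paper argument to compare yours against. Your reduction to properly $3$-coloring the undirected functional graph $G$ with edge set $\{\{x,T(x)\}:x\in A\}$ is exactly right, and the overall strategy is the standard one. However, there is a genuine gap at the step you yourself single out as the crux: the edge count does not establish the pseudoforest property for \emph{infinite} components. The inequality $|E(C)|\le|C|$ combined with ``a connected graph with $n$ vertices and at most $n$ edges contains at most one cycle'' is a finite-graph argument; when $|C|$ is an infinite cardinal, $|E(C)|\le|C|$ is satisfied by every connected graph on $C$ whatsoever (a connected graph on $\aleph_0$ vertices has at most $\aleph_0$ edges but may contain infinitely many cycles). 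Since the lemma is applied in the paper to $A_\eta\cup\{\alpha\}$ with $A_\eta$ typically infinite, the infinite case is the one that matters, and both your one-cycle-per-component analysis and the unique-predecessor claim in the BFS step currently rest on an unproved assertion.

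The assertion is true, and there are two clean repairs. (i) Argue directly with the function: every undirected cycle of $G$ must be coherently oriented by $T$ --- a cycle of length $k$ has $k$ edges, each of which must be of the form $\{z,T(z)\}$ for one of its endpoints $z$, and distinct edges require distinct such $z$, so the $k$ edges are claimed bijectively by the $k$ cycle vertices, forcing $T$ to act as a cyclic rotation on them; thus every cycle of $G$ is a directed $T$-cycle, and since the eventual cycle of the forward orbit $x,T(x),T^2(x),\dots$ is invariant under passing to a neighbor, a connected component cannot contain two distinct directed cycles. (ii) Bypass components entirely: your edge count applied to an arbitrary \emph{finite} subset $S\subseteq A$ shows that the subgraph induced on $S$ has at most $|S|$ edges, hence some vertex of degree at most $2$; so every finite subgraph of $G$ is $2$-degenerate and therefore greedily $3$-colorable, and the de Bruijn--Erd\H{o}s compactness theorem (a use of choice no stronger than the one already implicit in your component-by-component assembly) yields a proper $3$-coloring of $G$ itself. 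Either patch completes your proof; the rest of your argument (the tree and unicyclic colorings, and the transfinite bookkeeping) is fine.
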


\begin{thm}\label{premprop}
Suppose $\mathcal{A}=\left(\{A_\xi\}_{\xi\in I}, \mathcal{F}\right)$ is an algebra such that $\mathcal{F}=\bigcup_{\xi\in I}\mathcal{G}_\xi$. Then:
\begin{enumerate}
\item For each $\vec{e}\in\Omega_\eta$, $\mathcal{A}$ is an $\vec{e}$-Ramsey algebra if and only if $(A_\eta, \mathcal{G}_\eta)$ is a Ramsey algebra.
\item Suppose $\vec{e}\not\in\Omega$. Then, $\mathcal{A}$ is an $\vec{e}$-Ramsey algebra if and only if every element of $A_{\vec{e}(0)}$ can be sent
    into the set of fixed points $S_{\vec{e}(0)}=\left\{c\in A_{\vec{e}(0)}:f(c)=c\;\text{for each unary}\;f\in\mathcal{G}_{\vec{e}(0)}\right\}$ by
    finitely many applications of the unary operations $f\in\mathcal{G}_{\vec{e}(0)}$. In particular, if $\mathcal{G}_{\vec{e}(0)}$ does not have
    unary operations, then $S_{\vec{e}(0)}=A_{\vec{e}(0)}$ and so $\mathcal{A}$ is an $\vec{e}$-Ramsey algebra.
\end{enumerate}
\end{thm}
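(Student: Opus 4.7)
The plan is to combine the structural results of Lemma~\ref{disjointops} (which already describes how reductions respect the disjoint decomposition $\mathcal{F}=\bigcup_{\xi\in I}\mathcal{G}_\xi$) with a direct combinatorial construction for part~(1) and Lemma~\ref{katetov} for part~(2). Throughout, write $n_0<n_1<\cdots$ for the positions at which $\vec{e}$ takes the value $\eta=\vec{e}(0)$.

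For part~(1), the key identity is Lemma~\ref{disjointops}(2): when $\vec{e}\in\Omega_\eta$ and $\vec{b}$ is $\vec{e}$-sorted, $\fr^{\vec{e}}_\mathcal{F}(\vec{b})$ coincides with $\fr_{\mathcal{G}_\eta}(\vec{\beta})$, where $\vec{\beta}$ is the $A_\eta$-subsequence of $\vec{b}$. The direction $(\Rightarrow)$ is easy: given $\vec{\beta}\in{^\omega}A_\eta$ and $X\subseteq A_\eta$, extend $\vec{\beta}$ to an $\vec{e}$-sorted $\vec{b}$ by filling the non-$\eta$ slots with arbitrary elements of the appropriate phyla, apply the $\vec{e}$-Ramsey hypothesis to get a homogeneous $\vec{a}\leq_\mathcal{F}\vec{b}$, and take the $A_\eta$-subsequence $\vec{\alpha}$ of $\vec{a}$. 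The orderly terms appearing at the $\eta$-positions of the reduction lie in $\ot(\mathcal{G}_\eta)$ by Lemma~\ref{disjointops}(1), so $\vec{\alpha}\leq_{\mathcal{G}_\eta}\vec{\beta}$, and homogeneity transfers by Lemma~\ref{disjointops}(2). For $(\Leftarrow)$, given $\vec{e}$-sorted $\vec{b}$ and $X\subseteq A_\eta$, apply the Ramsey property of $(A_\eta,\mathcal{G}_\eta)$ to the $A_\eta$-subsequence $\vec{\beta}$ of $\vec{b}$ to obtain $\vec{\alpha}\leq_{\mathcal{G}_\eta}\vec{\beta}$ with $\vec{\alpha}(k)=g_k(\vec{\beta}_k)$ homogeneous for $X$. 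I then realize a subsequence of $\vec{\alpha}$ as the $A_\eta$-part of an $\vec{e}$-sorted $\vec{a}\leq_\mathcal{F}\vec{b}$ by a greedy block-by-block construction: define an increasing $\sigma:\omega\to\omega$ inductively so that between the last $\vec{b}$-position occupied by $\vec{\beta}_{\sigma(k)}$ and the first $\vec{b}$-position occupied by $\vec{\beta}_{\sigma(k+1)}$ there are $\vec{b}$-positions of sorts $\vec{e}(n_k+1),\ldots,\vec{e}(n_{k+1}-1)$ in order, then set $\vec{a}(n_k)=g_{\sigma(k)}(\vec{\beta}_{\sigma(k)})$ and fill each intermediate non-$\eta$ slot by the identity orderly term on the selected $\vec{b}$-singleton. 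The condition $\vec{e}\in\Omega$ guarantees that each sort appearing in $\vec{e}$ occurs infinitely often in $\vec{b}$, so $\sigma$ is well-defined. Lemma~\ref{disjointops}(2) applied to $\vec{a}$ then gives $\fr^{\vec{e}}_\mathcal{F}(\vec{a})=\fr_{\mathcal{G}_\eta}(\vec{\alpha}^\ast)\subseteq\fr_{\mathcal{G}_\eta}(\vec{\alpha})$, where $\vec{\alpha}^\ast=(\vec{\alpha}(\sigma(k)))_{k\in\omega}$ is the $A_\eta$-subsequence of $\vec{a}$, so the required homogeneity for $X$ is inherited.

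For part~(2), direction $(\Leftarrow)$ is immediate: given $\vec{e}$-sorted $\vec{b}$ and $X\subseteq A_{\vec{e}(0)}$, the hypothesis supplies a unary composition $h$ with $h(\vec{b}(0))\in S_{\vec{e}(0)}$; setting $\vec{a}(0)=h(\vec{b}(0))$ and $\vec{a}(j)=\vec{b}(j)$ for $j\geq 1$ gives $\vec{a}\leq_\mathcal{F}\vec{b}$, and Lemma~\ref{disjointops}(3) together with $\vec{a}(0)\in S_{\vec{e}(0)}$ yields $\fr^{\vec{e}}_\mathcal{F}(\vec{a})=\{\vec{a}(0)\}$, trivially homogeneous for $X$. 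For $(\Rightarrow)$, argue by contradiction: if some $b\in A_{\vec{e}(0)}$ cannot be sent into $S_{\vec{e}(0)}$ by unary operations, then its orbit $T=\{h(b):h\text{ is a composition of unary operations in }\mathcal{G}_{\vec{e}(0)}\}$ misses $S_{\vec{e}(0)}$, so for each $c\in T$ there exists a unary $f_c\in\mathcal{G}_{\vec{e}(0)}$ with $f_c(c)\neq c$. The map $\phi:T\to T$ defined by $\phi(c)=f_c(c)$ has no fixed points, so Lemma~\ref{katetov} produces a partition $T=P_1\cup P_2\cup P_3$ with $\phi[P_i]\cap P_i=\varnothing$. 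Extending this to a finite partition of $A_{\vec{e}(0)}$ and picking any $\vec{e}$-sorted $\vec{b}$ with $\vec{b}(0)=b$, Lemma~\ref{disjointops}(3) forces every $\vec{e}$-sorted reduction $\vec{a}\leq_\mathcal{F}\vec{b}$ to satisfy $\vec{a}(0)\in T$ and $\phi(\vec{a}(0))\in\fr^{\vec{e}}_\mathcal{F}(\vec{a})$; since $\vec{a}(0)$ and $\phi(\vec{a}(0))$ lie in different $P_i$'s, $\fr^{\vec{e}}_\mathcal{F}(\vec{a})$ cannot be contained in any single piece of the partition, contradicting Remark~\ref{finitepartition}.

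The main obstacle is the interleaving step in part~(1)$(\Leftarrow)$: one cannot simply copy $\vec{\alpha}$ position-by-position into $\vec{a}$, because the $A_\eta$-blocks $\vec{\beta}_k$ realizing consecutive $\vec{\alpha}(k)$'s may straddle $\vec{b}$-positions that are needed to supply the non-$\eta$ elements between $n_k$ and $n_{k+1}$, violating the constraint that $\vec{b}_0\ast\vec{b}_1\ast\cdots$ be a subsequence of $\vec{b}$. Thinning $\vec{\alpha}$ through $\sigma$ opens the needed gaps, and this is precisely the step that requires $\vec{e}\in\Omega$ in its full strength rather than merely $\vec{e}(0)=\eta$.
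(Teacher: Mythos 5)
Your proposal is correct and follows essentially the same route as the paper: Lemma~\ref{disjointops} for both directions of part~(1), a thinning/interleaving construction for the $(\Leftarrow)$ direction of part~(1), and Katet\v{o}v's lemma plus Remark~\ref{finitepartition} for the $(\Rightarrow)$ direction of part~(2). The only (harmless) variation is in that last step, where you apply Katet\v{o}v to the fixed-point-free map on the orbit $T$ of the bad element directly, whereas the paper builds a fixed-point-free map on all of $A_\eta\cup\{\alpha\}$ using an auxiliary symbol; both yield the required three-piece partition.
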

\begin{proof}
(1)($\Rightarrow$) Suppose that $\mathcal{A}$ is an $\vec{e}$-Ramsey algebra for $\vec{e}\in\Omega_\eta$ and let $\vec{\beta}$ be an infinite sequence
of $A_\eta$ and $X\subseteq A_\eta$. Now pick an $\vec{e}$-sorted sequence $\vec{b}$ such that its subsequence all of whose terms are members of
$A_\eta$ is $\vec{\beta}$.

Since $\mathcal{A}$ is an $\vec{e}$-Ramsey algebra, let $\vec{a}\leq_\mathcal{F}\vec{b}$ be $\vec{e}$-sorted and homogeneous for $X$. Thus, define the
infinite sequence $\vec{\alpha}$ of $A_\eta$ as the subsequence of $\vec{a}$ all of whose terms lie in $A_\eta$. Observe that
$\vec{\alpha}\leq_{\mathcal{G}_\eta}\vec{\beta}$, a property again guaranteed by the disjoint composition of $\mathcal{F}$. We may now apply Part 2 of
Lemma \ref{disjointops} to obtain $\fr_{\mathcal{G}_\eta}(\vec{\alpha})=\fr_\mathcal{F}^{\vec{e}}(\vec{a})$. As a result, the homogeneity of
$\vec{\alpha}$ for $X$ entails.

($\Leftarrow$) Suppose that $(A_\eta, \mathcal{G}_\eta)$ is a Ramsey algebra and let $\vec{b}$ be an $\vec{e}$-sorted sequence and $X\subseteq A_\eta$.
To see that $\mathcal{A}$ is an $\vec{e}$-Ramsey algebra, it suffices to show that, if $\vec{\beta}$ is the subsequence of $\vec{b}$ consisting of terms
lying in $A_\eta$ and $\vec{\alpha}\leq_{\mathcal{G}_\eta}\vec{\beta}$ is homogeneous for $X$, then there exists an $\vec{e}$-sorted
$\vec{a}\leq_\mathcal{F}\vec{b}$ such that $\vec{a}$ is homogeneous for $X$.

Indeed, if $\vec{\beta}, \vec{\alpha}$ are as described, then for each $i\in\omega$, let $g_i\in\ot(\mathcal{G}_\eta)\subseteq\ot(\mathcal{F})$ and let
$\tau_i$ be a subsequence of $\vec{\beta}$ such that $g_i(\tau_i)=\vec{\alpha}(i)$ and $\tau_0\ast\tau_1\ast\cdots$ forms a subsequence of
$\vec{\beta}$. Since $\vec{\beta}$ is a subsequence of $\vec{b}$, it follows that $\tau_0\ast\tau_1\ast\cdots$ is also a subsequence of $\vec{b}$.

We then construct the desired $\vec{a}$ recursively as follows:

First, set $\vec{a}(0)=\vec{\alpha}(0)$.

Next, suppose that $\vec{a}$ has been constructed up to $\vec{a}(N)$ in such a way that
\begin{enumerate}
\item $\vec{a}(0)=f_0(\sigma_0), \ldots, \vec{a}(N)=f_N(\sigma_N)$ for some subsequences $\sigma_0, \ldots, \sigma_N$ of $\vec{b}$ and some $f_0,
    \ldots, f_N\in\ot(\mathcal{F})$, and $\sigma=\sigma_0\ast\cdots\ast\sigma_N$ forms a subsequence of $\vec{b}$,
\item the subsequence of $\langle\vec{a}(0), \ldots, \vec{a}(N)\rangle$ whose terms are members of $A_\eta$ is a subsequence of $\vec{\alpha}$.
\end{enumerate}

We then decide the value of $\vec{a}(N+1)$ as follows:
\begin{enumerate}
\item[(i)] If $\vec{e}(N+1)=\eta$, then we choose an $i^\ast$ so that $\tau_{i^\ast}$ is a subsequence of the tail of $\vec{b}$ without $\sigma$. We
    then set $\vec{a}(N+1)=\vec{\alpha}(i^\ast)=g_{i^\ast}(\tau_{i^\ast})$.
\item[(ii)] If $\vec{e}(N+1)=\xi\neq\eta$, then we pick the first term belonging in $A_\xi$ in the tail of $\vec{b}$ without $\sigma$ and let it be
    the value of $\vec{a}(N+1)$.
\end{enumerate}

The sequence $\vec{a}$ so constructed is $\vec{e}$-sorted and is such that $\vec{a}\leq_\mathcal{F}\vec{b}$. In addition, its subsequence
$\vec{\alpha}'$ of terms that belong in $A_\eta$ forms a subsequence of $\vec{\alpha}$. As such, we have
$\fr_{\mathcal{G}_\eta}(\vec{\alpha}')\subseteq\fr_{\mathcal{G}_\eta}(\vec{\alpha})$. Now, by Part 2 of Lemma \ref{disjointops}, we have
$\fr_\mathcal{F}^{\vec{e}}(\vec{a})=\fr_{\mathcal{G}_\eta}(\vec{\alpha}')$. The homogeneity of $\vec{a}$ for $X$ then follows from the homogeneity of
$\vec{\alpha}$ for $X$.

(2)($\Leftarrow$) Let $\vec{b}$ be an $\vec{e}$-sorted sequence of let $X\subseteq A_{\vec{e}}$ be given. Using Lemma \ref{disjointops}, pick an
$\vec{e}$-sorted $\vec{a}\leq_\mathcal{F}\vec{b}$ such that  $\vec{a}(0)\in S_{\vec{e}(0)}$. Then, it follows from the definition of $S_{\vec{e}(0)}$
and Lemma \ref{disjointops} that $\fr_\mathcal{F}^{\vec{e}}(\vec{a})=\{\vec{a}(0)\}$, therefore, $\vec{a}$ is homogeneous for $X$.

($\Rightarrow$) The proof of this direction can be traced back to the proof of Theorem 4.2 of \cite{teh2016ramsey}. Let $\vec{e}\not\in\Omega$ be given
and let $\vec{e}(0)=\eta$. Mimicking the proof there, let $\alpha$ be a symbol not already in $A_\eta$ and fix an $a_0\in A_\eta$. We define an
operation $T$ on $A_\eta\cup\{\alpha\}$ such that
\begin{enumerate}
\item[(a)] if $a\in S_\eta$, then $T(a)=\alpha$,
\item[(b)] $T(\alpha)=a_0$, and
\item[(c)] if $a\in A_\eta\setminus S_\eta$, then $T(a)=f(a)\neq a$ for some $f\in\mathcal{G}_\eta$.
\end{enumerate}

Such $T$ does not have fixed points and so, by Lemma \ref{katetov}, let $P_1\cup P_2\cup P_3$ be a partition of $A_\eta\cup\{\alpha\}$ such that
$T[P_i]\cap P_i=\varnothing$ for each $i=1, 2, 3$. This induces a partition $Q_1\cup Q_2\cup Q_3$ of $A_\eta$ such that whenever $a\in A\cap S_\eta^C$,
if $a\in Q_i$, then $f(a)\not\in Q_i$ for some $f\in\mathcal{G}_\eta$.

Now, suppose that $\mathcal{A}$ is an $\vec{e}$-Ramsey algebra. Towards a contradiction, assume that there exists $c\in A_\eta$ such that $c$ cannot be
sent into $S_\eta$ by finitely many applications of the unary operations of $\mathcal{G}_\eta$. Fix one such element $c$ and pick any $\vec{e}$-sorted
$\vec{b}$ such that $\vec{b}(0)=c$. Referencing Remark \ref{finitepartition}, there exists an $\vec{e}$-sorted $\vec{a}\leq_\mathcal{F}\vec{b}$ and an
$i^\ast\in\{1, 2, 3\}$ such that $\fr_\mathcal{F}^{\vec{e}}(\vec{a})\subseteq Q_{i^\ast}$. By Part 3 of Lemma \ref{disjointops}, $\vec{a}(0)$ is
obtained from $\vec{b}(0)=c$ by finitely many applications of the unary operations of $\mathcal{G}_\eta$, therefore $\vec{a}(0)\not\in S_\eta$ by our
choice of $c$. The operation $T$ now furnishes an $f\in\mathcal{G}_\eta$ such that $f(\vec{a}(0))\not\in Q_{i^\ast}$, which contradicts
$f(\vec{a}(0))\in\fr_\mathcal{F}(\vec{a})\subseteq Q_{i^\ast}$.
\end{proof}

A simple example will show us that, if heterogeneous operations are present, then the conclusion for Part 1 above need not hold:

\begin{eg}\label{egconstantbinary}
Suppose $\mathcal{A}=(A_0, A_1, \circ, h)$, where $h:A_1\to A_0$ is constant with value $a_1$ and $\circ:A_0^2\to A_0$ is constant with value $a_0$, for
which $a_0\neq a_1$. Let $\alpha_0\in A_1$ and let $\vec{e}\in\Omega_0$ be nonconstant. Note that $(A_0, \circ)$ is a Ramsey algebra. However, for each
$\vec{e}$-sorted $\vec{a}\leq_\mathcal{F}\vec{b}$, the set $\fr_\mathcal{F}^{\vec{e}}(\vec{a})$ has at least two elements $a_0, a_1$ and so no such
$\vec{c}$ can be homogeneous for $\{a_0\}\subseteq A_0$.
\end{eg}

Now, let us call by $\Omega^J$ the set of those $\vec{e}\in\Omega$ such that all indices $\xi\in I$ appearing in $\vec{e}$ form the subset $J\subseteq
I$. Also, let $\Omega_\eta^J=\left\{\vec{e}\in\Omega^J:\vec{e}\in\Omega_\eta\right\}$.

\begin{thm}\label{anysort}
For any family $\mathcal{F}$ of operations, if $\mathcal{A}=\left(\bigcup_{\xi\in I}A_\xi, \mathcal{F}\right)$ is an $\vec{e}$-Ramsey algebra for an
$\vec{e}\in\Omega_\eta^J$, then it is an $\vec{e}$-Ramsey algebra for all $\vec{e}\in\Omega_\eta^J$.
\end{thm}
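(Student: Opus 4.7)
The plan is a sandwich argument: pick $\vec{e}_1, \vec{e}_2 \in \Omega_\eta^J$ with $\mathcal{A}$ being $\vec{e}_1$-Ramsey, and given an arbitrary $\vec{e}_2$-sorted sequence $\vec{b}_2$ together with $X \subseteq A_\eta$, I would build a chain
$\vec{a}_2 \leq_\mathcal{F} \vec{a}_1 \leq_\mathcal{F} \vec{b}_1 \leq_\mathcal{F} \vec{b}_2$
of sorts $\vec{e}_2, \vec{e}_1, \vec{e}_1, \vec{e}_2$ respectively, with $\vec{a}_1$ furnished by the $\vec{e}_1$-Ramsey hypothesis and $\vec{a}_2$ inheriting its homogeneity for $X$ from $\vec{a}_1$ via Lemma \ref{frsubsets}.

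The crucial technical observation is that any two sorts in $\Omega^J$ are \emph{mutually extractable} as subsequences: if $\vec{c}$ is $\vec{e}_2$-sorted, then for each $\xi \in J$ there are infinitely many positions of $\vec{c}$ whose terms lie in $A_\xi$, so I can recursively choose strictly increasing positions $p_0 < p_1 < \cdots$ with $\vec{c}(p_i) \in A_{\vec{e}_1(i)}$; the resulting subsequence $\langle \vec{c}(p_i)\rangle_{i\in\omega}$ is $\vec{e}_1$-sorted, and taking identity functions $\id_{A_{\vec{e}_1(i)}}$ as the orderly terms together with the singleton subsequences $\langle \vec{c}(p_i)\rangle$ directly witnesses it as a reduction of $\vec{c}$. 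I would apply this construction twice: first to $\vec{b}_2$ to obtain an $\vec{e}_1$-sorted $\vec{b}_1 \leq_\mathcal{F} \vec{b}_2$, then to the $\vec{e}_1$-sorted $\vec{a}_1 \leq_\mathcal{F} \vec{b}_1$ supplied by the $\vec{e}_1$-Ramsey hypothesis (homogeneous for $X$) to obtain an $\vec{e}_2$-sorted $\vec{a}_2 \leq_\mathcal{F} \vec{a}_1$; for this second extraction, I can take $p_0 = 0$ since $\vec{a}_1(0) \in A_\eta$ already matches $\vec{e}_2(0) = \eta$.

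By transitivity of $\leq_\mathcal{F}$, I obtain $\vec{a}_2 \leq_\mathcal{F} \vec{b}_2$, so $\vec{a}_2$ is a genuine $\vec{e}_2$-sorted reduction of $\vec{b}_2$. Lemma \ref{frsubsets}, applied with $\vec{e} = \vec{e}_1 \in \Omega$ and $\vec{e}\,' = \vec{e}_2$ (both having initial index $\eta$), then yields $\fr_\mathcal{F}^{\vec{e}_2}(\vec{a}_2) \subseteq \fr_\mathcal{F}^{\vec{e}_1}(\vec{a}_1)$, so the homogeneity of $\vec{a}_1$ for $X$ descends directly to $\vec{a}_2$. There is no real obstacle in this argument; the only point deserving care is verifying that each extracted subsequence has exactly the intended sort, which is routine once one observes that every $\xi \in J$ occurs infinitely often in every sort in $\Omega^J$.
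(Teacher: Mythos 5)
Your proposal is correct and follows essentially the same route as the paper's own proof: extract a subsequence of the opposite sort, apply the Ramsey hypothesis, extract back, and conclude via transitivity of $\leq_\mathcal{F}$ together with Lemma \ref{frsubsets}. The only difference is that you spell out the subsequence-extraction step (identity orderly terms on singleton subsequences) which the paper leaves implicit.
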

\begin{proof}
Let $\vec{e}, \vec{e}\:'\in\Omega_\eta^J$ and suppose that $\mathcal{A}$ is an $\vec{e}$-Ramsey algebra. It suffices to prove that if $\mathcal{A}$ is
an $\vec{e}$-Ramsey algebra, then it is an $\vec{e}\:'$-Ramsey algebra.

Let the $\vec{e}\:'$-sorted sequence $\vec{b}'$ and $X\subseteq A_\eta$ be given. We obtain an $\vec{e}$-sorted subsequence $\vec{b}$ of $\vec{b}'$,
which is possible since $\vec{e}\:'\in\Omega^J$. Then, choose an $\vec{e}$-sorted $\vec{a}\leq_\mathcal{F}\vec{b}$ such that $\vec{a}$ is homogeneous
for $X$ as we have assumed that $\mathcal{A}$ is an $\vec{e}$-Ramsey algebra. Next, we obtain an $\vec{e}\:'$-sorted subsequence $\vec{a}'$ of
$\vec{a}$, possible since $\vec{e}\in\Omega^J$.

By the transitivity of $\leq_\mathcal{F}$, we have $\vec{a}'\leq_\mathcal{F}\vec{b}'$; additionally,
$\fr_\mathcal{F}^{\vec{e}\:'}(\vec{a}')\subseteq\fr_\mathcal{F}^{\vec{e}}(\vec{a})$ following Lemma \ref{frsubsets}, whence the homogeneity of
$\fr_\mathcal{F}^{\vec{e}\:'}(\vec{a}')$ for $X$ entails from the homogeneity of $\fr_\mathcal{F}^{\vec{e}}(\vec{a})$ for $X$.
\end{proof}

We want to emphasize that the preceding theorem holds because, as we see in its proof, the sorts $\vec{e}$ and $\vec{e}\:'$ share the same first term
and the same set $J$ of indices. Part 1 of Theorem \ref{premprop} illustrates the dependence on the first term, whereas one can easily cook up an
example to demonstrate that sharing the same $J$ is indeed essential.

\section{Classification of Ramsey Vector Spaces}\label{main}
We now begin the study of vector spaces in the context of Ramsey algebra. For completeness sake, we carry out an analysis for all possible sorts in this
study. A vector space is a structure $(\mathbb{V}, \mathbb{F}, +_\mathbb{F}, \times_\mathbb{F}, +_\mathbb{V}, \cdot)$, where $(\mathbb{V},
+_\mathbb{V})$ forms an abelian group, $(\mathbb{F}, +_\mathbb{F}, \times_\mathbb{F})$ a field whose elements are called scalars, and
$\cdot:\mathbb{F}\times \mathbb{V}\rightarrow \mathbb{V}$ is scalar multiplication such that $1\cdot v=v$ for all $v\in\mathbb{V}$, and the following
axioms of distributivity hold:
\begin{enumerate}
\item $(r+_\mathbb{F} s)\cdot v=r\cdot v+_\mathbb{F} s\cdot v$ for each $r, s\in \mathbb{F}, v\in \mathbb{V}$;
\item $r\cdot(u+_\mathbb{V} v)=r\cdot u+_\mathbb{V} r\cdot v$ for each $r\in \mathbb{F}, u, v\in \mathbb{V}$.
\end{enumerate} Vector spaces are two-sorted heterogeneous structure: $(\mathbb{V}, \mathbb{F}, +_\mathbb{V}, +_\mathbb{F}, \times_\mathbb{F},
\cdot)$. In this section, $\mathcal{F}$ is reserved for the family $\{+_\mathbb{V}, +_\mathbb{F}, \times_\mathbb{F}, \cdot\}$ and we always let $A_0$
denote the underlying field of a vector space and $A_1$ the set of vectors. The uppercase ``oh'' $O$ is reserved for the zero vector in any vector
space under discussion.

A few observations regarding the orderly terms of $\mathcal{F}$ will be helpful.

\begin{lem}\label{typesoff}
Let $(\mathbb{V}, \mathbb{F}, \mathcal{F})$ be a vector space and $f\in\emph{OT}(\mathcal{F})$. The following hold:
\begin{enumerate}
\item The only unary orderly terms over $\mathcal{F}$ are the identity functions $\id_\mathbb{F}$ and $\id_\mathbb{V}$; the only binary orderly terms
    over $\mathcal{F}$ are the binary operations in $\mathcal{F}$.
\item If $\dom(f)=\mathbb{F}^n$ for some nonzero $n\in\omega$, then $\rn(f)\subseteq \mathbb{F}$.
\item If $\dom(f)=A_{\xi_1}\times\cdots\times A_{\xi_n}$ for some natural number $n\geq 1$ and there exist $i\leq n$ such that $\xi_i=1$, then it
    follows that $\rn(f)\subseteq \mathbb{V}$.
\item If $f$ satisfies the hypothesis of Fact 3, then $\xi_n=1$ (i.e. $A_{\xi_n}=\mathbb{V}$).
\item If f is vector valued and all the scalar components of $\bar{x}$ have the value $0$, then $f(\bar{x})$ is either the zero vector or the sum of
    some vectors appearing in $\bar{x}$; if $f$ is scalar valued, then $f(\bar{x})=0$.
\end{enumerate}
\end{lem}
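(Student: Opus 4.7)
The plan is to prove all five parts by a single structural induction on the collection $\ot(\mathcal{F})$, exploiting its smallest-closure definition. The base generators are the two identities $\id_{\mathbb{F}}, \id_{\mathbb{V}}$ together with the four binary operations $+_{\mathbb{F}}, \times_{\mathbb{F}}, +_{\mathbb{V}}, \cdot$, and any inductive clause has the shape $f = g(h_1(\bar{x}_1), \ldots, h_k(\bar{x}_k))$ with $g \in \mathcal{F}$; because every operation in $\mathcal{F}$ is binary, this forces $k = 2$ in every composition step, which is the single observation driving everything. Part (1) then falls out immediately: there is no unary generator in $\mathcal{F}$ and any composition has arity $n_1+n_2 \geq 2$, so the unary orderly terms are exactly the two identities; a binary composition forces $n_1 = n_2 = 1$, so $h_1, h_2$ are unary and hence identities by what we just proved, whereby the composition collapses to $g \in \mathcal{F}$.

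For (2) and (3) I would run a joint induction on the structure of $f$ to track how the sort of $\cod(f)$ is determined by $\dom(f)$; the base cases are a direct inspection of the six generators. In the step on $f = g(h_1, h_2)$: if every argument of $f$ is a scalar, then each $h_j$ sees only scalar inputs and is scalar-valued by the inductive hypothesis for (2), forcing $g \in \{+_{\mathbb{F}}, \times_{\mathbb{F}}\}$ and $\rn(f) \subseteq \mathbb{F}$; if some argument of $f$ is a vector, it is routed into some $h_j$ which is vector-valued by the inductive hypothesis for (3), so $g \in \{+_{\mathbb{V}}, \cdot\}$ and $\rn(f) \subseteq \mathbb{V}$. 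Part (4) is then an add-on induction using (2) and (3): the base cases $\id_{\mathbb{V}}, +_{\mathbb{V}}, \cdot$ all have their last argument typed $\mathbb{V}$, and in the inductive step the outer $g \in \{+_{\mathbb{V}}, \cdot\}$ has a $\mathbb{V}$-typed second input, so $h_2$ is vector-valued and hence has at least one vector argument (by the contrapositive of (2)); the inductive hypothesis for (4) applied to $h_2$ then says that the last argument of $h_2$, which coincides with $x_n$, is a vector.

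For (5) I would first apply (2) to observe that a scalar-valued orderly term has only scalar inputs, and then a routine structural induction establishes that every such term maps the all-zero scalar tuple to $0$: $\id_{\mathbb{F}}(0) = 0$, both $+_{\mathbb{F}}$ and $\times_{\mathbb{F}}$ annihilate $(0,0)$, and any composition reduces, by the inductive hypothesis, to feeding zeros into a scalar operation. For a vector-valued $f$ evaluated on an $\bar{x}$ whose scalar entries are all $0$, the base cases $\id_{\mathbb{V}}(v) = v$, $+_{\mathbb{V}}(v_1, v_2) = v_1 + v_2$, and $\cdot(0, v) = O$ are immediate. In the composition step $f = g(h_1, h_2)$ with $g \in \{+_{\mathbb{V}}, \cdot\}$: when $g = +_{\mathbb{V}}$, both $h_j$ are vector-valued and by induction each returns either $O$ or a sum of vectors from its subsequence of $\bar{x}$, whose $+_{\mathbb{V}}$-sum remains of the stated form; when $g = \cdot$, the first argument $h_1$ is scalar-valued, hence $h_1(\bar{x}_1) = 0$ by the scalar case, and so $f(\bar{x}) = 0 \cdot h_2(\bar{x}_2) = O$. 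The main obstacle I anticipate is merely keeping the interlocked induction organized so that each part invokes only earlier-established parts on strictly smaller subterms; but because $\mathcal{F}$ contains only four binary operations with cleanly separated type signatures, the case analysis at every step essentially writes itself, so I do not expect any substantive difficulty beyond bookkeeping.
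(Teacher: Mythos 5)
Your proposal is correct and follows essentially the same route as the paper: a structural induction on the generation of orderly terms, driven by the observation that every operation in $\mathcal{F}$ is binary so each composition step has exactly two blocks (in fact you supply more detail than the paper, which only writes out Fact 4 and sketches Fact 5). One trivial mislabel: in Part (5), the step ``a scalar-valued orderly term has only scalar inputs'' is the contrapositive of Fact (3) (using disjointness of $\mathbb{F}$ and $\mathbb{V}$), not an application of Fact (2), but the needed fact is available either way.
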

\begin{proof}
Carrying out an induction argument on the generation of the orderly terms will establish Facts 1 and 2.

Facts 3 through 5 can also be established by induction on the generation of orderly terms. We will only give the proof of Fact 4 and a sketch of the
proof of Fact 5.

Fact 4: The implication holds if $f$ is either $\text{id}_\mathbb{F}$ or $\text{id}_\mathbb{V}$. Similarly, if $f\in\mathcal{F}$, then the antecedent of
the implication holds provided $f$ is either scalar multiplication or vector addition. In both cases, the consequent of the implication holds.

Now, suppose $f(\bar{x}_1, \bar{x}_2)=g(h_1(\bar{x}_1), h_2(\bar{x}_2))$ for some $g\in\mathcal{F}$ and orderly terms $h_1$ and $h_2$, where the arity
of $g$ owes to the fact that every $g\in\mathcal{F}$ is binary. If $g$ is field addition or field multiplication, then both $h_1$ and $h_2$ are
scalar-valued and hence $\bar{x}_1$ and $\bar{x}_2$ are all scalar terms, contradicting $\xi_n=1$. Thus, $g$ is either vector addition or scalar
multiplication. In both cases, $h_2$ is vector valued. Hence, $\bar{x}_2$ involves some vector term. Then, by the induction hypothesis, the trailing
term of $\bar{x}_2$ is a vector.

Fact 5: For the base cases, the conclusion is obvious. Thus, suppose $f(\bar{x})=g(h_1(\bar{x}_1)$, $h_2(\bar{x}_2))$ with $g\in\mathcal{F}$ and with
$h_1, h_2\in\text{OT}(\mathcal{F})$ both satisfying the conclusion of Fact 5. Now assume $\bar{x}$ satisfies the hypothesis of Fact 5. If $g$ is vector
addition, then $f(\bar{x})=h_1(\bar{x}_1)+h_2(\bar{x}_2)$, and so $f$ clearly satisfies the conclusion of Fact 5; this is the case where the result is
the nonempty sum of some vectors from $\bar{x}$. Proofs of the other cases of $g$ are similarly obvious and will, hence, be omitted.
\end{proof}

The next lemma will be helpful in proving various Ramsey-algebraic properties of vector spaces.

\begin{lem}\label{sortietype}
Let $(\mathbb{V}, \mathbb{F}, \mathcal{F})$ be a vector space, let $\vec{b}$ be an $\vec{e}$-sorted sequence of elements of $\mathbb{F}\cup \mathbb{V}$,
and let $\vec{a}$ be an $\vec{e}$-sorted reduction of $\vec{b}$. If $\vec{e}\not\in\Omega$ with $n^*\in\omega$ being \emph{least} such that
$\vec{e}-(n^*+1)$ is constant, then $\vec{a}(i)=\vec{b}(i)$ for each $i\leq n^*$.
\end{lem}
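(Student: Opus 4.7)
My plan is to show that, on the initial segment $0, 1, \ldots, n^*$, the only reduction witnessing $\vec{a} \leq_{\mathcal{F}} \vec{b}$ is the trivial one: each $\bar{b}_i$ is the singleton $\langle \vec{b}(i) \rangle$ and each $f_i$ is an identity. The linchpin is that by minimality of $n^*$ and the hypothesis $\vec{e}\notin\Omega$, we have $\vec{e}(n^*) \neq \eta$, where $\eta$ denotes the constant value of $\vec{e} - (n^* + 1)$. (For $n^* > 0$ this is immediate from minimality; for $n^* = 0$ one uses $\vec{e}\notin\Omega$ to rule out $\vec{e}$ being constant $\eta$.) This asymmetry between the initial segment and the tail enables a pigeonhole argument.

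\noindent\textbf{Step 1: Sort of the trailing input.} Writing $\vec{a}(j) = f_j(\bar{b}_j)$ with $\bar{b}_0 \ast \bar{b}_1 \ast \cdots$ a subsequence of $\vec{b}$, I first note that each $\bar{b}_j$ is nonempty since $\mathcal{F}$ is a family of binary operations and, by Fact~1 of Lemma~\ref{typesoff}, the only unary orderly terms are the identities (hence every orderly term has arity $\geq 1$). Next I claim the last element of $\bar{b}_j$ has sort $\vec{e}(j)$. Indeed, if $\vec{e}(j) = 0$, then $f_j$ is scalar-valued, so by the contrapositive of Fact~3 all inputs of $f_j$ are scalar; if $\vec{e}(j) = 1$, Fact~4 directly asserts that the trailing input is a vector.

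\noindent\textbf{Step 2: Pigeonhole on positions.} Applying Step 1 to $j = n^*$, the trailing element of $\bar{b}_{n^*}$ has sort $\vec{e}(n^*) \neq \eta$. But every position $p > n^*$ of $\vec{b}$ has sort $\eta$, so that trailing element sits at some position $\leq n^*$ of $\vec{b}$. Since $\bar{b}_0, \ldots, \bar{b}_{n^*}$ together contribute pairwise distinct positions in $\vec{b}$ (subsequence condition), and all such positions lie in $\{0, 1, \ldots, n^*\}$, we get $\sum_{i=0}^{n^*} |\bar{b}_i| \leq n^* + 1$; combined with each $\bar{b}_i$ being nonempty, this forces equality and $|\bar{b}_i| = 1$ for every $i \leq n^*$. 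The order-preserving property of the subsequence then pins down the position of the sole element of $\bar{b}_i$ as exactly $i$, so $\bar{b}_i = \langle \vec{b}(i) \rangle$.

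\noindent\textbf{Step 3: Conclude.} Each $f_i$ is now a unary orderly term sending $\vec{b}(i)$ (of sort $\vec{e}(i)$) to $\vec{a}(i)$ (of the same sort), and Fact~1 of Lemma~\ref{typesoff} forces $f_i = \id_{A_{\vec{e}(i)}}$. Therefore $\vec{a}(i) = \vec{b}(i)$ for each $i \leq n^*$. There is no deep obstacle here; the argument is a careful bookkeeping exercise, and the subtlest point to verify is the opening observation that $\vec{e}(n^*) \neq \eta$, which then makes the pigeonhole count trivial.
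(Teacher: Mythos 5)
Your proof is correct and follows essentially the same route as the paper's: use the sort of the block feeding $\vec{a}(n^*)$ (via Facts 2--4) to pin that block inside positions $\le n^*$, then pigeonhole to make every block a singleton and invoke Fact 1 to conclude each $f_i$ is an identity. The only difference is cosmetic --- you argue via the trailing input's sort and so treat both eventual values of $\vec{e}$ uniformly, whereas the paper tracks the leading term and writes out only the eventually-$0$ case.
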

\begin{proof} For any such sort $\vec{e}$, the eventually constant value can be either $0$ or $1$. The proofs of both cases are similar and we will only
provide for the case that is eventually constant with value $0$. In such a case, $\vec{e}(n^*)=1$ and $\vec{e}(i)=0$ for all $i>n^*$.

To begin the proof, for each $i\leq n^*$, let $\vec{b}_i$ and $f_i\in\text{OT}(\mathcal{F})$ be such that $\vec{a}(i)=f_i(\bar{b}_i)$ and
$\vec{b}_1\ast\cdots\ast\vec{b}_{n^*}$ is a subsequence of $\vec{b}$. Let the leading term of $\vec{b}_{n^*}$ be $\vec{b}(j)$. We claim that $n^*=j$.
Clearly, $n^*\leq j$. Now assume that $n^*<j$. Then, we would have $f_{n^*}$ operating on scalars, yielding a scalar (Fact 2, Lemma \ref{typesoff}),
contradicting $\vec{a}(n^*)$ being a vector. That $f_{n^*}$ is unary follows from Fact 4 of Lemma \ref{typesoff}.

It now follows by the pigeonhole principle that $f_i$ for each $i<n^*$ is also unary, consequently each of the $f_i$ for $i\leq n^*$, is an identity
function. Clearly, the equality $\vec{a}(i)=\vec{b}(i)$ holds for each $i\leq n^*$.
\end{proof}

Lemma \ref{sortietype} leads to the following proposition:

\begin{prop}
If $\vec{e}\not\in\Omega$ or $\vec{e}=\langle 1, 1, 1, \dots\rangle$, then every vector space is an $\vec{e}$-Ramsey algebra. Every vector space is a
$\langle 0, 0, 0, \dots\rangle$-Ramsey algebra provided the underlying field is finite.
\end{prop}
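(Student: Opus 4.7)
\medskip
\noindent\textbf{Proof proposal.} The statement has three cases, and my plan is to handle them separately, using the classification of orderly terms in Lemma~\ref{typesoff} and the structural Lemma~\ref{sortietype} in the background.

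\emph{Case 1: $\vec{e}\notin\Omega$.} Since the phylum index set is $\{0,1\}$, any $\vec{e}\notin\Omega$ must be eventually constant (with only finitely many occurrences of the other value), so the hypothesis of Lemma~\ref{sortietype} is met. Let $n^*$ be the least integer such that $\vec{e}-(n^*+1)$ is constant. Applying Lemma~\ref{sortietype} to every $\vec{e}$-sorted reduction $\vec{c}\leq_\mathcal{F}\vec{b}$, we get $\vec{c}(0)=\vec{b}(0)$, so
\[
\fr_\mathcal{F}^{\vec{e}}(\vec{b})=\{\vec{b}(0)\}.
\]
Hence for any $X\subseteq A_{\vec{e}(0)}$, the sequence $\vec{b}$ itself is homogeneous (the singleton set is either contained in or disjoint from $X$). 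Nothing more is needed in this case.

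\emph{Case 2: $\vec{e}=\langle 1,1,1,\dots\rangle$.} Here every $\vec{e}$-sorted sequence is a sequence of vectors. The plan is to show that the vector-valued orderly terms of $\mathcal{F}$ with all-vector inputs are precisely the iterated applications of $+_\mathbb{V}$, so that the $\vec{e}$-Ramsey problem coincides with the ordinary Hindman problem on the semigroup $(\mathbb{V},+_\mathbb{V})$. More precisely, I would argue by induction on orderly terms: if $f(\bar{x})=g(h_1(\bar{x}_1),h_2(\bar{x}_2))$ is vector-valued with all-vector input, then $g$ cannot be $+_\mathbb{F}$ or $\times_\mathbb{F}$ (wrong codomain) nor $\cdot$ (would force $h_1$ to be scalar-valued with all-vector inputs, contradicting Fact~3 of Lemma~\ref{typesoff}), so $g=+_\mathbb{V}$ and the inductive hypothesis finishes the argument. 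Combined with Eqv.~\ref{Omegafr}, this gives $\fr_\mathcal{F}^{\vec{e}}(\vec{b})=\fr_{\{+_\mathbb{V}\}}(\vec{b})$ for any vector-sorted $\vec{b}$. Theorem~\ref{semigroup} applied to the abelian semigroup $(\mathbb{V},+_\mathbb{V})$ then supplies a $\vec{\alpha}\leq_{\{+_\mathbb{V}\}}\vec{b}$ homogeneous for $X$, and since $+_\mathbb{V}\in\mathcal{F}$ one has $\vec{\alpha}\leq_\mathcal{F}\vec{b}$ with matching $\fr$-sets.

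\emph{Case 3: $\vec{e}=\langle 0,0,0,\dots\rangle$ with $|\mathbb{F}|$ finite.} Here I would exploit the positive characteristic of $\mathbb{F}$ to collapse any scalar sequence to the constant zero sequence. Given any $\vec{e}$-sorted $\vec{b}$ and $X\subseteq\mathbb{F}$, first use the infinite pigeonhole principle on $\mathbb{F}$ to pass (via identity orderly terms) to a constant subsequence $\vec{b}'\leq_\mathcal{F}\vec{b}$ with value some $c\in\mathbb{F}$. Let $p=\operatorname{char}(\mathbb{F})$ and define $\vec{\alpha}$ by $\vec{\alpha}(i)=\vec{b}'(pi)+_\mathbb{F}\cdots+_\mathbb{F}\vec{b}'(pi+p-1)=p\cdot c=0$. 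Then $\vec{\alpha}\leq_\mathcal{F}\vec{b}$ is $\vec{e}$-sorted and constantly zero; by Fact~5 of Lemma~\ref{typesoff}, every scalar-valued orderly term applied to zeros produces zero, so $\fr_\mathcal{F}^{\vec{e}}(\vec{\alpha})=\{0\}$, which is homogeneous for $X$.

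\emph{Main obstacle.} The first case is immediate from Lemma~\ref{sortietype}, and the third is a short pigeonhole plus characteristic argument. The substantive step is in Case~2: establishing that vector-valued orderly terms of $\mathcal{F}$ with all-vector inputs collapse to iterated vector addition, which is the bridge that reduces the problem to Hindman's theorem. This is the place where I expect the write-up to need the most care, but it is a clean induction driven by Facts~3 and~4 of Lemma~\ref{typesoff}.
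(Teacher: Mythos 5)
Your proof is correct and uses exactly the ingredients the paper intends: Lemma~\ref{sortietype} for the eventually-constant sorts, the reduction of the all-vector sort to Hindman's theorem via the observation that orderly terms with all-vector inputs are iterated vector sums (Facts 3--4 of Lemma~\ref{typesoff}), and the collapse-to-zero argument for the all-scalar sort over a finite field, which is the same device used in the proof of Proposition~\ref{longproof}. The paper states the proposition without proof, and your write-up supplies precisely the details it leaves implicit.
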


The case where $\vec{e}\in\Omega$ is less trivial. The first proposition for such a case concerns vector spaces over finite fields. In the following
proof, we say that a finite sequence $\sigma$ is a reduction of the finite sequence $\tau$ if, for each $i\in\{0, \ldots, |\sigma|-1\}$, there exist
$f_i\in\ot(\mathcal{F})$ and a subsequence $\tau_i$ of $\tau$ such that
\begin{enumerate}
\item $\tau_0\ast\cdots\ast\tau_{|\sigma|-1}$ is a subsequence of $\tau$ and
\item $\sigma(i)=f_i(\tau_i)$ for each $i\in\{0, \ldots, |\sigma|-1\}$.
\end{enumerate}

\begin{prop}\label{longproof}
Every vector space over a finite field is an $\vec{e}$-Ramsey algebra for all $\vec{e}\in\Omega$.
\end{prop}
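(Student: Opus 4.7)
The plan is to construct a reduction $\vec{a}\leq_{\mathcal{F}}\vec{b}$ such that $\fr_{\mathcal{F}}^{\vec{e}}(\vec{a})$ collapses to a singleton (either $\{0\}$ or $\{O\}$), from which homogeneity for any $X\subseteq A_{\vec{e}(0)}$ follows automatically. If $\vec{e}$ is one of the two constant sorts $\langle 0,0,\ldots\rangle$ or $\langle 1,1,\ldots\rangle$, the preceding proposition already applies (with the finite-field hypothesis invoked for $\langle 0,0,\ldots\rangle$). So I will focus on the remaining case, where by virtue of $\vec{e}\in\Omega$ both indices $0$ and $1$ appear infinitely often in $\vec{e}$.

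Let $p=\operatorname{char}(\mathbb{F})$. By the pigeonhole principle on the infinite scalar subsequence of $\vec{b}$, some value $c\in\mathbb{F}$ occurs as $\vec{b}(i)$ for infinitely many $i$ with $\vec{e}(i)=0$. I then construct $\vec{a}$ greedily from left to right as follows. At every position $i$ with $\vec{e}(i)=0$, I set $\vec{a}(i)=0$ via the orderly term $s_1+s_2+\cdots+s_p$ applied to the next $p$ positions of $\vec{b}$ whose value is $c$, which evaluates to $pc=0$. At every position $i$ with $\vec{e}(i)=1$, I set $\vec{a}(i)=O$ via the orderly term $(s_1+\cdots+s_p)\cdot v$ applied to the next $p$ such scalar positions followed by the next vector position of $\vec{b}$, which evaluates to $0\cdot v=O$. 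The chosen blocks concatenate into a subsequence of $\vec{b}$, witnessing $\vec{a}\leq_{\mathcal{F}}\vec{b}$, and $\vec{a}$ is $\vec{e}$-sorted by construction. Feasibility of the greedy selection is automatic because infinitely many $c$-valued scalars and infinitely many vectors remain in every tail of $\vec{b}$.

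To finish, let $\vec{c}$ be any $\vec{e}$-sorted reduction of $\vec{a}$ and write $\vec{c}(0)=g(\bar{a}_0)$ for some orderly term $g$ and finite subsequence $\bar{a}_0$ of $\vec{a}$. If $\vec{e}(0)=0$, then Fact 3 of Lemma \ref{typesoff} forces $\bar{a}_0$ to consist entirely of scalars, each equal to $0$, and Fact 5 of the same lemma gives $g(\bar{a}_0)=0$. If $\vec{e}(0)=1$, then all scalars of $\bar{a}_0$ equal $0$, so Fact 5 yields $g(\bar{a}_0)\in\{O\}\cup\{\text{sums of vectors appearing in }\bar{a}_0\}$; since every vector of $\bar{a}_0$ is $O$, the result is $O$ in all cases. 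Hence $\fr_{\mathcal{F}}^{\vec{e}}(\vec{a})=\{\vec{a}(0)\}$ is a singleton and is trivially either contained in or disjoint from $X$. The principal obstacle in this plan is not conceptual but logistical: I must carefully verify that the greedy selection yields finite blocks $\bar{b}_i$ concatenating in left-to-right order within $\vec{b}$, while simultaneously respecting the sort $\vec{e}$ and the scalars-before-vector ordering inside each block that produces a vector entry of $\vec{a}$.
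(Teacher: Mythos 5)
Your proof is correct, but it takes a genuinely different route from the paper's in the only nontrivial case. For the scalar entries both arguments are the same: pigeonhole a scalar value recurring infinitely often in $\vec{b}$ and use the finite additive order (your $p=\operatorname{char}(\mathbb{F})$, the paper's order $s$ of the repeated scalar $\rho$) to manufacture the zero scalar as an orderly-term value on a block of copies. The divergence is in how the vector entries are treated when $\vec{e}(0)=1$: the paper keeps genuine vectors of $\vec{b}$, passes to the vector subsequence of the scalar-annihilated reduction $\vec{c}$, invokes Hindman's Theorem (Theorem \ref{semigroup}) for the semigroup $(\mathbb{V},+_\mathbb{V})$ to get an auxiliary homogeneous sequence $\vec{a}'$, and then interleaves it back into an $\vec{e}$-sorted reduction, so that Fact 5 of Lemma \ref{typesoff} transfers homogeneity from $\vec{a}'$ to $\vec{a}$. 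You instead annihilate the vector entries as well, via the orderly term $(x_1+_\mathbb{F}\cdots+_\mathbb{F}x_p)\cdot y$ evaluated at $p$ copies of the repeated scalar followed by a later vector, which is a legitimate orderly term (the vector argument sits last, consistent with Fact 4, so the block ordering works out), and then Fact 5 collapses $\FR_\mathcal{F}^{\vec{e}}(\vec{a})$ to the singleton $\{O\}$. Your version is more elementary -- it needs no appeal to Hindman's Theorem and treats both values of $\vec{e}(0)$ uniformly -- while the paper's version produces a non-degenerate homogeneous reduction whose $\FR$-set is the finite-sums set of an actual Hindman sequence of vectors; that extra structure is not needed for the proposition as stated, so nothing is lost by your shortcut. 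Your deferral of the two constant sorts to the preceding proposition is also fine (the paper's proof simply absorbs them into the same case analysis), and the ``logistical'' concern you flag at the end is indeed the only bookkeeping to check: it goes through because the repeated scalar and the vectors each occur in every tail of $\vec{b}$.
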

\begin{proof}
Fix $\vec{e}$. Let $X\subseteq A_{\vec{e}(0)}$ be given and let $\vec{b}$ be an $\vec{e}$-sorted sequence of elements of $\mathbb{F}\cup \mathbb{V}$.
Since there are infinitely many scalars in $\vec{b}$ and the field $\mathbb{F}$ is finite, there exists a scalar, call it $\rho$, which occurs
infinitely often in $\vec{b}$. The scalar $\rho$ has a finite order, meaning there exists an $s\in\mathbb{Z}^+$ such that $s\rho=0$.

We construct an $\vec{e}$-sorted reduction $\vec{a}$ of $\vec{b}$ recursively.

(Case $\vec{e}(0)=0$.) Set $\vec{a}(0)=0$. Suppose $\langle\vec{a}(0), \ldots, \vec{a}(N)\rangle$ has been constructed as a reduction of some initial
segment $\sigma$ of $\vec{b}$. If $\vec{e}(N+1)=0$, set $\vec{a}(N+1)=0$. On the other hand, if $\vec{e}(N+1)=1$, then let $\vec{a}(N+1)=v$, where $v$
is the first vector in $\vec{b}-\sigma$. Notice that $\langle\vec{a}(0), \ldots, \vec{a}(N+1)\rangle$ is a reduction of some initial segment $\vec{b}$
extending $\sigma$ (in the case $\vec{e}(N+1)=0$, we use the fact that there are infinitely many, and thus at least $s$ many, $\rho$'s in
$\vec{b}-\sigma$).

Now, since all scalar terms are $0$ in $\vec{a}$, by Facts 3 and 5 of Lemma \ref{typesoff}, it follows that we have
$\text{FR}_\mathcal{F}^{\vec{e}}(\vec{a})=\{0\}$ and the homogeneity of $\vec{a}$ for $X$ clearly holds.

(Case $\vec{e}(0)=1$.) Our plan is to start from the given $\vec{b}$ and obtain $\vec{a}\leq_\mathcal{F}\vec{c}\leq_\mathcal{F}\vec{b}$ of the same sort
$\vec{e}$ so that $\vec{a}$ is homogeneous for $X$. In the process, we will make use of an auxiliary sequence $\vec{a}'$.

The first step is, in a manner similar to the previous case, to reduce $\vec{b}$ to the sequence $\vec{c}$ having the property that its scalar terms are
all the zero scalar.

The next step is to obtain an auxiliary sequence $\vec{a}'$ homogeneous for $X$ with respect to $\{+_\mathbb{V}\}\subseteq\mathcal{F}$, obtained by
considering the subsequence of vector terms of $\vec{c}$ and applying Proposition \ref{semigroup} (recall that $(\mathbb{V}, +_\mathbb{V})$ is a
semigroup).

To obtain $\vec{a}$, set $\vec{a}(0)=\vec{a}'(0)$. Suppose $\langle\vec{a}(0), \ldots, \vec{a}(N)\rangle$ has been constructed from $\vec{c}$ as a
reduction of an initial segment $\sigma$ of $\vec{c}$ and such that the subsequence of vector terms of $\langle\vec{a}(0), \ldots, \vec{a}(N)\rangle$
forms a subsequence of $\vec{a}'$. Now, if $\vec{e}(N+1)=0$, set $\vec{a}(N+1)=0$. On the other hand, if $\vec{e}(N+1)=1$, set
$\vec{a}(N+1)=\vec{a}'(N')$ in such a way that $N'$ is the least such that $\vec{a}'(N')$ is a finite reduction of $\vec{c}-\sigma$ (possible because
$\vec{a}'$ is a reduction of $\vec{c}$) and such that the subsequence of vector terms of $\langle\vec{a}(0), \ldots, \vec{a}(N), \vec{a}'(N')\rangle$
forms a subsequence of $\vec{a}'$. This extended sequence is yet another reduction of an extended initial segment of $\vec{c}$. Thus, $\vec{a}$ is an
$\vec{e}$-sorted reduction of $\vec{c}$.

Now, all the scalar terms of $\vec{a}$ are 0, whereas its vector terms form a subsequence of $\vec{a}'$. Thus, by Fact 5 of Lemma \ref{typesoff}, if
$f\in\text{OT}(\mathcal{F})$ is vector-valued and $\vec{q}$ is a finite subsequence of $\vec{a}$, then $f(\bar{q})$ is a sum of some vectors in
$\vec{q}$. It follows by the homogeneity of $\vec{a}'$ with respect to $X$ that $f(\bar{q})\in X$ for all such $\vec{q}$ and
$f\in\text{OT}(\mathcal{F})$, or $f(\bar{q})\not\in X$ for all such $\vec{q}$ and $f\in\text{OT}(\mathcal{F})$. Therefore,
$\text{FR}_\mathcal{F}^{\vec{e}}(\vec{a})\subseteq X$ or $\text{FR}_\mathcal{F}^{\vec{e}}(\vec{a})\subseteq X^C$ by Eq. \ref{Omegafr}.
\end{proof}

The story is different when $\mathbb{F}$ is infinite. We begin with two lemmas.

\begin{lem}\label{seqinfintdom}
Suppose $(\mathbb{F}, +_\mathbb{F}, \times_\mathbb{F})$ is an infinite field. There exists a sequence $\vec{\beta}\in{^\omega\!\mathbb{F}}$ such that
for every orderly terms $f, g, f', g'$ over $\{+_\mathbb{F}, \times_\mathbb{F}\}$ and for every finite subsequences $\vec{\beta}_0\ast\vec{\beta}_1$ and
$\vec{\beta}_2\ast\vec{\beta}_3$ of $\vec{\beta}$, the following holds:
\begin{equation}\label{desiredinequation}
f(\bar{\beta}_0)+_\mathbb{F}g(\bar{\beta}_1) \\ \neq f'(\bar{\beta}_2)\times_\mathbb{F}g'(\bar{\beta}_3).
\end{equation}
\end{lem}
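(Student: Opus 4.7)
The plan is to construct $\vec{\beta}$ by strong induction on its length, maintaining the invariant that for each $n$, every non-zero multilinear polynomial $R \in \mathbb{Z}[x_0, \dots, x_{n-1}]$ with coefficients in $\{-1, 0, 1\}$ satisfies $R(\vec{\beta}(0), \dots, \vec{\beta}(n-1)) \neq 0$ in $\mathbb{F}$. This invariant is tailored to the problem by the following structural observation on orderly terms: each $m$-ary orderly term over $\{+_\mathbb{F}, \times_\mathbb{F}\}$, when fully expanded, is multilinear in its $m$ variables with coefficients in $\{0, 1\}$ --- multilinear because each variable occurs exactly once in an orderly composition, and $\{0, 1\}$-valued because the composition tree uniquely determines each expanded monomial (since $I_0 \cap I_1 = I_2 \cap I_3 = \emptyset$ there is no cancellation or accumulation in sums, and products of disjoint-variable factors have unique monomial factorizations). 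Hence every polynomial of the form $P = f(\bar{x}_{I_0}) + g(\bar{x}_{I_1}) - f'(\bar{x}_{I_2}) \cdot g'(\bar{x}_{I_3})$ with admissible $I_0, I_1, I_2, I_3$ is multilinear with coefficients in $\{-1, 0, 1\}$, and a short monomial-support comparison (the monomials of $f + g$ lie entirely within $I_0$ or entirely within $I_1$, while each monomial of $f' \cdot g'$ mixes variables from both $I_2$ and $I_3$) shows that $P \not\equiv 0$ as a polynomial.

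In the inductive step at stage $n$, assume the invariant holds for the prefix $(\vec{\beta}(0), \dots, \vec{\beta}(n-1))$. For each non-zero multilinear $\{-1, 0, 1\}$-coefficient polynomial $R \in \mathbb{Z}[x_0, \dots, x_n]$, write $R = R_0(x_0, \dots, x_{n-1}) + R_1(x_0, \dots, x_{n-1}) \cdot x_n$ by multilinearity in $x_n$; then $R_0$ and $R_1$ are again multilinear with coefficients in $\{-1, 0, 1\}$. If $R$ involves $x_n$ then $R_1 \not\equiv 0$, so the inductive hypothesis gives $R_1(\vec{\beta}(0), \dots, \vec{\beta}(n-1)) \neq 0$, making $R(\vec{\beta}(0), \dots, \vec{\beta}(n-1), X) \in \mathbb{F}[X]$ a non-zero polynomial of degree exactly one with a single root in $\mathbb{F}$. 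If instead $R$ does not involve $x_n$, then $R \in \mathbb{Z}[x_0, \dots, x_{n-1}]$ evaluates non-zero at the prefix by the inductive hypothesis, imposing no constraint on $\vec{\beta}(n)$. Since the number of such polynomials $R$ is finite (at most $3^{2^{n+1}}$) and $\mathbb{F}$ is infinite, some $\vec{\beta}(n) \in \mathbb{F}$ avoids every bad value, preserving the invariant at stage $n+1$. The base case $n = 0$ reduces to choosing $\vec{\beta}(0) \in \mathbb{F} \setminus \{-1, 0, 1\}$, which is possible in any infinite field.

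The main obstacle is ensuring that the substituted polynomial $R(\vec{\beta}(0), \dots, \vec{\beta}(n-1), X)$ is not identically zero in $X$; otherwise no choice of $\vec{\beta}(n)$ would succeed. The strengthened inductive invariant handles this precisely: by demanding the non-vanishing property for the entire class of non-zero multilinear $\{-1, 0, 1\}$-coefficient polynomials (rather than just those of the specific form $f + g - f' \cdot g'$), we obtain closure under extraction of the $x_n$-coefficient, which is the operation needed to analyze $R(\vec{\beta}(0), \dots, \vec{\beta}(n-1), X)$. Since every $P$ appearing in the statement of the lemma lies in this class of polynomials, the resulting $\vec{\beta}$ satisfies $P(\vec{\beta}(0), \dots, \vec{\beta}(n)) \neq 0$, which is exactly the desired inequality $f(\bar{\beta}_0) +_\mathbb{F} g(\bar{\beta}_1) \neq f'(\bar{\beta}_2) \times_\mathbb{F} g'(\bar{\beta}_3)$.
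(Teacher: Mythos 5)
Your overall strategy is sound and is essentially a rigorous rendering of the paper's own sketch: the paper also builds $\vec{\beta}$ recursively, choosing each new term outside a finite set of bad values, and defers the details to Lemma 5.4 of the cited reference. Your multilinear-polynomial invariant is a clean way to make precise why each stage imposes only finitely many constraints, the identification of orderly terms over $\{+_\mathbb{F},\times_\mathbb{F}\}$ with multilinear polynomials whose monomials are nonempty, squarefree, pairwise distinct and all of coefficient $1$ is correct, and the inductive step via the leading coefficient $R_1$ works (note also that a $\{-1,0,1\}$-coefficient polynomial that is nonzero over $\mathbb{Z}$ stays nonzero over any field, so characteristic causes no trouble).

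The gap is in your justification that $P=f+g-f'\cdot g'$ is not the zero polynomial. This is the load-bearing step: if some admissible $P$ were identically zero, the lemma itself would be false and no choice of $\vec{\beta}$ could succeed. The two observations in your parenthesis do not contradict each other: a monomial can lie entirely within $I_0$ and still mix variables from $I_2$ and $I_3$ (take $I_2\cup I_3\subseteq I_0$, e.g.\ $I_0=\{1,2\}$, $I_2=\{1\}$, $I_3=\{2\}$ and the monomial $x_1x_2$). Here is a repair. Suppose $f+g=f'g'$ as polynomials. Substituting $0$ for every variable indexed by $I_2$ kills every monomial of $f'$ (each is a nonempty product of $I_2$-variables), hence kills $f'g'$; since the surviving monomials of $f+g$ are distinct and have coefficient $1$, it follows that every monomial of $f$ and of $g$ must contain a variable from $I_2$, and symmetrically a variable from $I_3$. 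In particular there exist $a\in I_0\cap I_3$ and $b\in I_1\cap I_2$. But $\vec{\beta}_0\ast\vec{\beta}_1$ and $\vec{\beta}_2\ast\vec{\beta}_3$ are subsequences of $\vec{\beta}$, so $\max I_0<\min I_1$ gives $a<b$, while $\max I_2<\min I_3$ gives $b<a$, a contradiction. (Alternatively, without using the ordering: for a fixed monomial $m'$ of $f'$, all products $m'm''$ contain the nonempty support of $m'$ and hence lie in the same block $I_0$ or $I_1$; the same holds for a fixed $m''$, so by connectivity of the complete bipartite graph on the monomials of $f'$ and $g'$ all monomials of $f'g'$ lie in a single block, contradicting the fact that $f$ and $g$ each contribute at least one monomial to different blocks.) With this step filled in, your proof is complete.
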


Such sequence $\vec{\beta}$ exists owing to the growth rate of the orderly terms over the field operations. To construct $\vec{\beta}$ recursively, we
can begin with two distinct field elements almost arbitrarily. The orderly terms that can apply to these elements are limited to the field operations
themselves. To construct the third element, we only need to pick a field element outside the image set of the first two elements under the field
operations with an added requirement, namely the element picked must satisfy Inequation (\ref{desiredinequation}). Such procedure is carried out
\emph{ad infinitum} where, at each step, the next element is picked from outside the image set of the previous elements under the orderly terms that can
operate on them, the number of available orderly terms which is finite, and explicitly requiring Inequation (\ref{desiredinequation}) to hold thus far.
Each recursive step can proceed because of the assumption that the field is \emph{infinite}. For a complete proof, see Lemma 5.4 of
\cite{teh2016ramsey}.

\begin{cor}\label{RingTeh}
No infinite field is a Ramsey algebra.
\end{cor}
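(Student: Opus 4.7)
The plan is to use the sequence $\vec{\beta}$ furnished by Lemma \ref{seqinfintdom} as the witness to the failure of the Ramsey property, viewing $(\mathbb{F}, \{+_\mathbb{F}, \times_\mathbb{F}\})$ as a homogeneous algebra. I would exhibit a single subset $X \subseteq \mathbb{F}$ for which no reduction of $\vec{\beta}$ is homogeneous.

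Define
\[
X = \{\, f(\bar{\sigma}_0) +_\mathbb{F} g(\bar{\sigma}_1) : f, g \in \ot(\mathcal{F}),\ \sigma_0 \ast \sigma_1 \text{ is a finite subsequence of } \vec{\beta}\,\}.
\]
Given any $\vec{a} \leq_\mathcal{F} \vec{\beta}$, Definition \ref{reduction} lets us write $\vec{a}(0) = h_0(\bar{\tau}_0)$ and $\vec{a}(1) = h_1(\bar{\tau}_1)$ with $h_0, h_1 \in \ot(\mathcal{F})$ and $\tau_0 \ast \tau_1$ a finite subsequence of $\vec{\beta}$. Then $\vec{a}(0) +_\mathbb{F} \vec{a}(1) \in X$ by the very definition of $X$ (take $f = h_0$, $g = h_1$, $\sigma_i = \tau_i$). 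On the other hand, I would argue that $\vec{a}(0) \times_\mathbb{F} \vec{a}(1) \notin X$: if instead $h_0(\bar{\tau}_0) \times_\mathbb{F} h_1(\bar{\tau}_1) = f'(\bar{\sigma}'_0) +_\mathbb{F} g'(\bar{\sigma}'_1)$ for some orderly terms $f', g'$ and some subsequence $\sigma'_0 \ast \sigma'_1$ of $\vec{\beta}$, then Inequation (\ref{desiredinequation}) of Lemma \ref{seqinfintdom} would be violated by choosing its $f, g, f', g'$ to be $f', g', h_0, h_1$ respectively. Since $\vec{a}(0) +_\mathbb{F} \vec{a}(1)$ and $\vec{a}(0) \times_\mathbb{F} \vec{a}(1)$ both belong to $\fr_\mathcal{F}(\vec{a})$ by Eq.~(\ref{homFR}), the set $\fr_\mathcal{F}(\vec{a})$ meets both $X$ and its complement, so $\vec{a}$ cannot be homogeneous for $X$.

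There is no real obstacle here, as the combinatorial heart of the argument has already been absorbed into Lemma \ref{seqinfintdom}. The only point worth being careful about is that the first two entries of any reduction of $\vec{\beta}$ can be written in the exact form demanded by the left-hand side of (\ref{desiredinequation}), which follows directly from the requirement in Definition \ref{reduction} that the witnessing subsequences $\tau_0$ and $\tau_1$ satisfy $\tau_0 \ast \tau_1$ is a subsequence of $\vec{\beta}$. This is also why inspecting only the first two entries of $\vec{a}$ suffices: the coloring $X$ is tailored so that adding those two entries always lands in $X$ while multiplying them always lands outside.
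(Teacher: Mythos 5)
Your proof is correct and follows essentially the same route as the paper: the paper also takes the set $Y=\{f(\bar{\beta}_0)+_\mathbb{F}g(\bar{\beta}_1):\dots\}$ built from the sequence of Lemma \ref{seqinfintdom} and observes that for any reduction $\vec{a}$ of $\vec{\beta}$ the sum $\vec{a}(0)+_\mathbb{F}\vec{a}(1)$ lies in $Y$ while the product $\vec{a}(0)\times_\mathbb{F}\vec{a}(1)$ does not. Your write-up is in fact slightly more explicit than the paper's, since you spell out how the witnessing orderly terms and subsequences from Definition \ref{reduction} are fed into Inequation (\ref{desiredinequation}).
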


The proof of this corollary hinges upon a sequence $\vec{\beta}$ given in Lemma \ref{seqinfintdom} and the associated subset $Y$ of the field
$\mathbb{F}$:
\begin{equation}\label{Y}
Y=\left\{f(\bar{\beta}_0)+_\mathbb{F}g(\bar{\beta}_1):\Psi(f, g)\right\}
\end{equation}
where $\Psi(f, g)$ is the statement ``$f, g\in\text{OT}(\{+_\mathbb{F}, \times_\mathbb{F}\})$ and
$\vec{\beta}_0\ast\vec{\beta}_1$ is a finite subsequence of $\vec{\beta}$.''

A moment's reflection reveals that, for each $f, g\in\text{OT}(\{+_\mathbb{F}, \times_\mathbb{F}\})$ and each finite subsequence
$\vec{\beta}_0\ast\vec{\beta}_1$ of $\vec{\beta}$, we have
\begin{equation*}
f(\bar{\beta}_0)\times_\mathbb{F}g(\bar{\beta}_1)\not\in Y.
\end{equation*}

Thus, given an arbitrary $\vec{a}\leq_{\{+_\mathbb{F}, \times_\mathbb{F}\}}\vec{\beta}$, we have $\vec{a}(0)+_\mathbb{F}\vec{a}(1)\in Y$ and
$\vec{a}(0)\times_\mathbb{F}\vec{a}(1)\not\in Y$. Hence, no $\vec{a}\leq_{\{+_\mathbb{F}, \times_\mathbb{F}\}}\vec{\beta}$ is homogeneous for $Y$,
whence the corollary follows.

\begin{lem}
Let $\mathbb{F}$ be infinite and $\vec{e}$ a given sort. Suppose $\vec{\beta}\in{^\omega\mathbb{F}}$ and suppose $v\in\mathbb{V}$ is a fixed
\emph{nonzero} vector. Define
\begin{displaymath}
   \vec{b}(i) = \left\{
     \begin{array}{lr}
       \vec{\beta}(i) & \text{if}\; \vec{e}(i)=0\\
       \vec{\beta}(i)\cdot v & \text{otherwise}.
     \end{array}
   \right.
\end{displaymath}
Then the following holds:

If $F$ is an orderly term over $\mathcal{F}$ and $\vec{b}_0$ is some finite subsequence of $\vec{b}$, then there exists some $|\vec{b}_0|$-ary orderly
term $f$ over $\{+_\mathbb{F}, \times_\mathbb{F}\}$ such that
\begin{displaymath}
   F(\bar{b}_0) = \left\{
     \begin{array}{lr}
       f(\bar{b}_0) & \text{if}\; \emph{Rn}(F)\subseteq \mathbb{F}\\
       f(\bar{b}_0)\cdot v & \text{if}\; \emph{Rn}(F)\subseteq \mathbb{V}
     \end{array}.
   \right.
\end{displaymath}
\end{lem}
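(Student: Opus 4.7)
The plan is to prove this lemma by induction on the generation of the orderly term $F$ as described in Definition \ref{ordcomp}. Before beginning, I would clarify the notational shorthand: every entry of $\vec{b}$ is either the scalar $\vec{\beta}(i)$ (when $\vec{e}(i)=0$) or the vector $\vec{\beta}(i)\cdot v$ (when $\vec{e}(i)=1$), so the tuple $\bar{b}_0$ determines a unique associated scalar tuple $\bar{\beta}_0$ of the same length, and the expression $f(\bar{b}_0)$ is to be read as $f(\bar{\beta}_0)$, which makes sense since $f$ is a field-valued orderly term of arity $|\bar{b}_0|$.

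For the base cases I would handle each generator of $\ot(\mathcal{F})$ individually. For $F=\id_\mathbb{F}$ or $F=\id_\mathbb{V}$, take $f=\id_\mathbb{F}$; the equality $F(\vec{b}(i))=\vec{\beta}(i)$ or $F(\vec{b}(i))=\vec{\beta}(i)\cdot v$, respectively, is immediate. For $F\in\{+_\mathbb{F},\times_\mathbb{F}\}$, the finite subsequence $\bar{b}_0$ consists of two scalars $\vec{\beta}(i),\vec{\beta}(j)$, so take $f=F$. For $F=+_\mathbb{V}$, the entries of $\bar{b}_0$ must be two vectors $\vec{\beta}(i)\cdot v$ and $\vec{\beta}(j)\cdot v$, and distributivity gives $F(\bar{b}_0)=(\vec{\beta}(i)+_\mathbb{F}\vec{\beta}(j))\cdot v$, so take $f=+_\mathbb{F}$. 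For $F=\cdot$, $\bar{b}_0=\langle\vec{\beta}(i),\vec{\beta}(j)\cdot v\rangle$ and associativity of scalar multiplication gives $F(\bar{b}_0)=(\vec{\beta}(i)\times_\mathbb{F}\vec{\beta}(j))\cdot v$, so take $f=\times_\mathbb{F}$.

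For the inductive step, suppose $F(\bar{x}_1,\bar{x}_2)=G(H_1(\bar{x}_1),H_2(\bar{x}_2))$ with $G\in\mathcal{F}$ and $H_1,H_2\in\ot(\mathcal{F})$ satisfying the inductive hypothesis. Partition $\bar{b}_0$ into $\bar{b}_{0,1}\ast\bar{b}_{0,2}$ according to the arities of $H_1,H_2$, and let $f_1,f_2$ be the orderly terms over $\{+_\mathbb{F},\times_\mathbb{F}\}$ provided by the hypothesis. I then case on $G$, using Lemma \ref{typesoff} to identify the range type of each $H_j$: if $G\in\{+_\mathbb{F},\times_\mathbb{F}\}$ then by Fact 2 both $H_1,H_2$ are scalar-valued, so simply set $f$ to be the orderly composition $G(f_1,f_2)$; if $G=+_\mathbb{V}$ then both $H_1,H_2$ are vector-valued and distributivity gives $F(\bar{b}_0)=(f_1(\bar{b}_{0,1})+_\mathbb{F}f_2(\bar{b}_{0,2}))\cdot v$, so take $f=+_\mathbb{F}(f_1,f_2)$; if $G=\cdot$ then Fact 4 forces $H_1$ to be scalar-valued and $H_2$ to be vector-valued, and associativity gives $F(\bar{b}_0)=(f_1(\bar{b}_{0,1})\times_\mathbb{F}f_2(\bar{b}_{0,2}))\cdot v$, so take $f=\times_\mathbb{F}(f_1,f_2)$.

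The only real obstacle is bookkeeping: correctly identifying, in each inductive case, the range type of the sub-terms $H_1,H_2$ so that the right branch of the inductive hypothesis can be invoked, and then combining the results via the vector space axioms (associativity and distributivity of scalar multiplication). Lemma \ref{typesoff} supplies all the range-type information needed, and the assumption that $v$ is nonzero is not used in this lemma, since the lemma only needs an exact algebraic identity rather than any separation property of $v$.
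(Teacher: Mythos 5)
Your proof is correct and follows essentially the same route as the paper's: structural induction on the orderly term $F$, casing on the outer operation $G$ and using distributivity and the compatibility of scalar multiplication to pull the scalar computation out in front of $v$. The only cosmetic difference is that the paper dispatches the scalar-valued case in one step (noting $\dom(F)=\mathbb{F}^n$ forces $F$ itself to be an orderly term over the field operations) and runs the induction only for vector-valued $F$, whereas you fold both cases into a single induction; your explicit reading of $f(\bar{b}_0)$ as evaluation on the associated scalar tuple $\bar{\beta}_0$ matches the paper's intent.
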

\begin{proof}
When $\text{Rn}(F)\subseteq \mathbb{F}$, Lemma \ref{typesoff} implies that $\text{Dom}(F)$ must be the Cartesian product $\mathbb{F}^n$. We may let
$f=F$.

As for the case $\text{Rn}(F)\subseteq \mathbb{V}$, we prove the conclusion of the lemma by induction on the length of the subsequence
$\langle\vec{b}(i_1), \ldots, \vec{b}(i_n)\rangle$. Starting from $n=1$,
\begin{equation*}
F(\vec{b}(i_1))=\text{id}_\mathbb{V}(\vec{b}(i_1))=\text{id}_\mathbb{V}(\vec{\beta}(i_1)\cdot v)=\vec{\beta}(i_1)\cdot
v=\text{id}_\mathbb{F}(\vec{\beta}(i_1))\cdot v,
\end{equation*}
by Fact 1 of Lemma \ref{typesoff} and clearly $\text{id}_\mathbb{F}\in\text{OT}(\{+_\mathbb{F}, \times_\mathbb{F}\})$.

For the inductive step, let $F$ be of arity $N+1$ and $\langle\vec{b}(i_1), \ldots, \vec{b}(i_{N+1})\rangle$ some subsequence of $\vec{b}$ of length
$N+1$. Breaking up $F$ into its components gives
\begin{equation*}
F(\vec{b}(i_1), \ldots, \vec{b}(i_{N+1}))=g(h_1(\vec{b}(i_1), \ldots, \vec{b}(i_r)), h_2(\vec{b}(i_{r+1}), \ldots, \vec{b}(i_{N+1}))),
\end{equation*}
for some $g\in\mathcal{F}$ and $h_1, h_2\in\text{OT}(\mathcal{F})$. Either $g$ is vector addition or $g$ is scalar multiplication ($g$ cannot be a field
operation because $\text{Rn}(f)\subseteq\mathbb{V}$ by case assumption).

If $g$ is vector addition, then $h_1$ and $h_2$ are vector-valued. The induction hypothesis applies to give
\begin{eqnarray*}
&& F(\vec{b}(i_1), \ldots, \vec{b}(i_{N+1})) \\
&=& \left(f_1(\vec{\beta}(i_1), \ldots, \vec{\beta}(i_r))\cdot v\right)+_\mathbb{V}\left(f_2(\vec{\beta}(i_{r+1}), \ldots, \vec{\beta}(i_{N+1}))\cdot
v\right) \\
&=& \left(f_1(\vec{\beta}(i_1), \ldots, \vec{\beta}(i_r))+_\mathbb{F}f_2(\vec{\beta}(i_{r+1}), \ldots, \vec{\beta}(i_{N+1}))\right)\cdot v
\end{eqnarray*}
for some $f_1, f_2\in\text{OT}(\{+_\mathbb{F}, \times_\mathbb{F}\})$. The required $f\in\text{OT}(\{+_\mathbb{F}, \times_\mathbb{F}\})$ can be clearly
seen to be $f(x_1, \ldots, x_{N+1})=f_1(x_1, \ldots, x_r)+_\mathbb{F}f_2(x_{r+1}, \ldots, x_{N+1})$.

On the other hand, if $g$ is scalar multiplication, then $h_1$ is scalar-valued and $h_2$ is vector-valued, and the induction hypothesis applies to give
\begin{equation*}
F(\vec{b}(i_1), \ldots, \vec{b}(i_{N+1}))=\left(f_1(\vec{\beta}(i_1), \ldots, \vec{\beta}(i_r))\right)\cdot\left(f_2(\vec{\beta}(i_{r+1}), \ldots,
\vec{\beta}(i_{N+1}))\cdot v\right)
\end{equation*}
for some $f_1, f_2\in\text{OT}(\{+_\mathbb{F}, \times_\mathbb{F}\})$, from which it follows that
\begin{equation*}
F(\vec{b}(i_1), \ldots, \vec{b}(i_{N+1}))=f(\vec{b}(i_1), \ldots, \vec{b}(i_{N+1}))\cdot v,
\end{equation*}
where $f(x_1, \ldots, x_{N+1})=f_1(x_1, \ldots, x_r)\times_\mathbb{F}f_2(x_{r+1}, \ldots, x_{N+1})$ is again an orderly term over $\{+_\mathbb{F},
\times_\mathbb{F}\}$.
\end{proof}

Call a vector space nontrivial if $\mathbb{V}\neq\{O\}$.

\begin{prop}\label{nontrivialvecspaceinftyfield}
Suppose that $\vec{e}\in\Omega$. Then, no nontrivial vector space over an infinite field is an $\vec{e}$-Ramsey algebra.
\end{prop}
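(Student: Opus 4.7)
The plan is to leverage the sequence $\vec{\beta}\in{^\omega\mathbb{F}}$ supplied by Lemma~\ref{seqinfintdom}, the associated set $Y\subseteq\mathbb{F}$ from Eq.~\ref{Y}, and the scalar-factorization lemma immediately preceding the proposition, in order to exhibit, for any $\vec{e}\in\Omega$ in which the index $0$ appears (the complementary case $\vec{e}=\langle 1,1,1,\dots\rangle$ is already handled by the previous proposition via Hindman's theorem), a coloring $X\subseteq A_{\vec{e}(0)}$ and a starting $\vec{e}$-sorted sequence $\vec{b}$ for which no $\vec{e}$-sorted reduction is homogeneous.

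Since $0$ occurs in $\vec{e}\in\Omega$ it does so infinitely often. Fix a nonzero vector $v\in\mathbb{V}$ (available by nontriviality) and build $\vec{b}$ as prescribed by the scalar-factorization lemma, so that $\vec{b}(i)=\vec{\beta}(i)$ when $\vec{e}(i)=0$ and $\vec{b}(i)=\vec{\beta}(i)\cdot v$ when $\vec{e}(i)=1$. For any $\vec{e}$-sorted $\vec{a}\leq_\mathcal{F}\vec{b}$, that lemma yields for each $i$ an orderly term $f_i\in\ot(\{+_\mathbb{F},\times_\mathbb{F}\})$ and a finite subsequence $\bar{\beta}_i$ of $\vec{\beta}$ with $\vec{a}(i)=f_i(\bar{\beta}_i)$ if $\vec{e}(i)=0$ and $\vec{a}(i)=f_i(\bar{\beta}_i)\cdot v$ if $\vec{e}(i)=1$, where the indices of the $\bar{\beta}_i$'s respect the order forced by the reduction.

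\emph{Subcase} $\vec{e}(0)=0$: I would set $X=Y$ and pick $j>0$ with $\vec{e}(j)=0$. Then $\vec{a}(0)+_\mathbb{F}\vec{a}(j)=f_0(\bar{\beta}_0)+_\mathbb{F}f_j(\bar{\beta}_j)\in Y$ directly by the definition of $Y$, while $\vec{a}(0)\times_\mathbb{F}\vec{a}(j)=f_0(\bar{\beta}_0)\times_\mathbb{F}f_j(\bar{\beta}_j)\notin Y$ by the defining inequality of $\vec{\beta}$. Both elements lie in $\fr_\mathcal{F}^{\vec{e}}(\vec{a})$ via Eq.~\ref{Omegafr}, so $\vec{a}$ cannot be homogeneous for $X$. \emph{Subcase} $\vec{e}(0)=1$: I would set $X=\{y\cdot v:y\in Y\}\subseteq\mathbb{V}$; since $v\ne O$, the assignment $y\mapsto y\cdot v$ is injective, so membership in $X$ is controlled purely by the scalar coefficient. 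Pick $j$ with $\vec{e}(j)=0$ and $j'>j$ with $\vec{e}(j')=1$. Then $\vec{a}(0)+_\mathbb{V}\vec{a}(j')=\bigl(f_0(\bar{\beta}_0)+_\mathbb{F}f_{j'}(\bar{\beta}_{j'})\bigr)\cdot v\in X$, whereas $\vec{a}(j)\cdot\vec{a}(j')=\bigl(f_j(\bar{\beta}_j)\times_\mathbb{F}f_{j'}(\bar{\beta}_{j'})\bigr)\cdot v\notin X$, once again by the defining inequality of $\vec{\beta}$ combined with injectivity. As both elements belong to $\fr_\mathcal{F}^{\vec{e}}(\vec{a})$, no such $\vec{a}$ is homogeneous.

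The main technical point that must not be glossed over is that the various $\bar{\beta}$-subsequences extracted above (for instance $\bar{\beta}_0$ together with $\bar{\beta}_j$, or $\bar{\beta}_j$ together with $\bar{\beta}_{j'}$) must concatenate, in the appropriate order, into genuine subsequences of $\vec{\beta}$; otherwise the defining clauses of $Y$ and of Lemma~\ref{seqinfintdom} do not apply. This is inherited from the reduction $\vec{a}\leq_\mathcal{F}\vec{b}$, whose very definition demands that the corresponding $\bar{b}$-subsequences concatenate into a subsequence of $\vec{b}$, and since $\vec{b}$ is defined position-wise from $\vec{\beta}$, the same transfers to the $\vec{\beta}$-level. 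Also delicate is the need, in the second subcase, to choose $j<j'$ so that the orderly term $\cdot$ (scalar then vector) can legitimately be applied to the finite subsequence $\langle\vec{a}(j),\vec{a}(j')\rangle$ of $\vec{a}$.
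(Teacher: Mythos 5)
Your proposal is correct and follows essentially the same route as the paper: the same sequence $\vec{\beta}$ from Lemma~\ref{seqinfintdom}, the same set $Y$ and its scalar-multiple copy $X=\{y\cdot v:y\in Y\}$, the same construction of $\vec{b}$ via the preceding factorization lemma, and the same pair of witnesses (a sum landing in $Y$ or $X$, a product landing outside) extracted from an arbitrary $\vec{e}$-sorted reduction. Your explicit attention to the concatenation-order requirement on the $\bar{\beta}$-blocks, and to excluding the constant-$1$ sort (for which the claim genuinely fails, cf.\ Theorem~\ref{summary}), are points the paper's own proof leaves implicit.
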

\begin{proof}
Let $\vec{\beta}$ be a fixed sequence guaranteed by Lemma \ref{seqinfintdom} and let $Y$ be the associated set given in Eq. \ref{Y}. Pick a nonzero
$v\in V$ and define $\vec{b}$ by
\begin{displaymath}
   \vec{b}(i) = \left\{
     \begin{array}{lr}
       \vec{\beta}(i) & \text{if}\; \vec{e}(i)=0\\
       \vec{\beta}(i)\cdot v & \text{otherwise}.
     \end{array}
   \right.
\end{displaymath}

Accompanying the set $Y$ is the set $X=\{\alpha\cdot v:\alpha\in Y\}$.

If $\vec{a}\leq_\mathcal{F}\vec{b}$, an application of the preceding lemma then shows that for each $i\in\omega$, there exist some orderly term $h_i$
over $\{+_\mathbb{F}, \times_\mathbb{F}\}$ and some finite subsequence $\vec{\beta}_0$ of $\vec{\beta}$ such that
\begin{displaymath}
   \vec{a}(i) = \left\{
     \begin{array}{lr}
       h_i(\bar{\beta}_0) & \text{if}\; \vec{e}(i)=0\\
       h_i(\bar{\beta}_0)\cdot v & \text{otherwise}.
     \end{array}
   \right.
\end{displaymath}

Fix an arbitrary $\vec{e}$-sorted reduction $\vec{a}$ of $\vec{b}$. It suffices to show that $\vec{a}$ is not homogeneous with respect to $X$ or $Y$
depending on whether $\vec{e}\in\Omega_0$ or $\vec{e}\in\Omega_1$, respectively:

(When $\vec{e}\in\Omega_1$.) First, pick $m_2>m_1>0$ such that $\vec{e}(m_1)=0$ and $\vec{e}(m_2)=1$. Let $\vec{c}_1, \vec{c}_2$ be $\vec{e}$-sorted
reductions of $\vec{a}$ such that
\begin{eqnarray*}
\vec{c}_1(0) &=& \vec{a}(0)+_\mathbb{V}\vec{a}(m_2)\\
&=& h_0(\bar{\beta}_0)\cdot v+_\mathbb{V}h_{m_2}(\bar{\beta}_1)\cdot v\\
&=& [h_0(\bar{\beta}_0)+_\mathbb{F}h_{m_2}(\bar{\beta}_1)]\cdot v,\\
\end{eqnarray*}
whereas
\begin{eqnarray*}
\vec{c}_2(0) &=& \vec{a}(m_1)\cdot\vec{a}(m_2) \\
&=& h_{m_1}(\bar{\beta}_0)\cdot [h_{m_2}(\bar{\beta}_1)\cdot v] \\
&=& [h_{m_1}(\bar{\beta}_0)\times_\mathbb{F} h_{m_2}(\bar{\beta}_1)]\cdot v. \\
\end{eqnarray*}
(Such $\vec{c}_1$ and $\vec{c}_2$ exist as $\vec{e}$-sorted reductions of $\vec{a}$ since there are infinitely many of both 0's and 1's among the terms
of $\vec{e}$: each of $\vec{c}_1-1$ and $\vec{c}_2-1$ forms a subsequence of $\vec{a}-(m_2+1)$.) It follows that
$\text{FR}_\mathcal{F}^{\vec{e}}(\vec{a})\cap X\neq\varnothing$ and $\text{FR}_\mathcal{F}^{\vec{e}}(\vec{a})\cap X^C\neq\varnothing$ since
$\vec{c}_1(0)\in X$ while $\vec{c}_2(0)\in X^C$.

(When $\vec{e}\in\Omega_0$.) Pick an $m>0$ such that $\vec{e}(m)=0$. Let $\vec{c}_1, \vec{c}_2$ be $\vec{e}$-sorted reductions of $\vec{a}$ such that
$\vec{c}_1(0)=\vec{a}(0)+_\mathbb{F}\vec{a}(m)$, while $\vec{c}_2=\vec{a}(0)\times_\mathbb{F}\vec{a}(m)$. It then follows that $\vec{c}_1(0)\in Y$ while
$\vec{c}_2(0)\not\in Y$. As such, $\text{FR}_\mathcal{F}^{\vec{e}}(\vec{a})\cap Y\neq\varnothing$ and $\text{FR}_\mathcal{F}^{\vec{e}}(\vec{a})\cap
Y^C\neq\varnothing$.
\end{proof}

We dovetail the results above into a classification of vector spaces, thus summarizing the discussion of this section:

\begin{thm}\label{summary}
Let $(\mathbb{V}, \mathbb{F}, +_\mathbb{V}, +_\mathbb{F}, \times_\mathbb{F}, \cdot)$ be a vector space.
\begin{enumerate}
\item In the case where $\mathbb{F}$ is a finite field, the vector space is an $\vec{e}$-Ramsey algebra for all sorts $\vec{e}$.
\item In the case where $\mathbb{F}$ is an infinite field, the vector space is \emph{not} an $\vec{e}$-Ramsey algebra for any sorts $\vec{e}$ except
    when $\vec{e}$ is a nonconstant eventually constant sequence or when it is constant with value $1$.
\end{enumerate}
\end{thm}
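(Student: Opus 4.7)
The plan is to observe that the theorem is essentially a repackaging of the three propositions already established in this section; the whole proof reduces to a case analysis on the sort $\vec{e}$. I would begin by partitioning all possible sorts into four mutually exclusive and exhaustive types: (i) $\vec{e}\notin\Omega$; (ii) $\vec{e}$ is the constant sequence $\langle 1, 1, 1, \dots\rangle$; (iii) $\vec{e}$ is the constant sequence $\langle 0, 0, 0, \dots\rangle$; and (iv) $\vec{e}\in\Omega$ but is non-constant, so that both $0$ and $1$ appear in $\vec{e}$, each necessarily infinitely often.

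For Part 1, with $\mathbb{F}$ finite, cases (i), (ii), and (iii) are each immediate from the unlabelled proposition following Lemma \ref{sortietype} --- the invocation for case (iii) is the one that actually uses finiteness of $\mathbb{F}$. Case (iv) falls within $\Omega$, so Proposition \ref{longproof} applies and yields the conclusion. Together these four subcases cover every possible sort, establishing Part 1.

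For Part 2, with $\mathbb{F}$ infinite, the exceptions listed in the statement are exactly cases (i) and (ii); both are handled by the same unlabelled proposition, whose hypothesis in those cases does not depend on $|\mathbb{F}|$, which supplies the positive direction. For the negative direction, I would apply Proposition \ref{nontrivialvecspaceinftyfield} to cases (iii) and (iv). A brief verification is needed that the proof of Proposition \ref{nontrivialvecspaceinftyfield} applies in both subcases: in case (iii) the sort lies in $\Omega_0$ and any $m>0$ gives $\vec{e}(m)=0$; in case (iv), if $\vec{e}\in\Omega_0$ the same choice still works, while if $\vec{e}\in\Omega_1$ then both $0$- and $1$-valued coordinates occur past position $0$ (infinitely often), allowing the required $m_2>m_1>0$ with $\vec{e}(m_1)=0$ and $\vec{e}(m_2)=1$ to be picked.

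I do not anticipate any real obstacle; the argument is bookkeeping. The one sanity check worth flagging is internal consistency: the exception $\vec{e}=\langle 1, 1, 1, \dots\rangle$ in Part 2 does not clash with Proposition \ref{nontrivialvecspaceinftyfield}, because that proposition's construction requires the presence of a $0$-valued coordinate in $\vec{e}$ and so deliberately misses this single constant-$1$ sort. Thus the positive and negative ingredients partition the sorts cleanly and the theorem follows.
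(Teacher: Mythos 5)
Your proposal is correct and matches the paper exactly: Theorem \ref{summary} is stated there without a separate proof, as a ``dovetailing'' of the unlabelled proposition following Lemma \ref{sortietype}, Proposition \ref{longproof}, and Proposition \ref{nontrivialvecspaceinftyfield}, and your four-way case split on $\vec{e}$ (together with the observation that, for $I=\{0,1\}$, the sorts outside $\Omega$ are precisely the nonconstant eventually constant ones, and that the proof of Proposition \ref{nontrivialvecspaceinftyfield} genuinely covers every sort in $\Omega$ other than the constant-$1$ one) is exactly the intended bookkeeping. The only caveat, which the paper shares, is that Proposition \ref{nontrivialvecspaceinftyfield} carries a nontriviality hypothesis ($\mathbb{V}\neq\{O\}$) that the theorem's statement silently drops.
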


\section{A Related Homogeneous Structure}\label{hom}
Scalar multiplication in a vector space $(\mathbb{V}, \mathbb{F}, +_\mathbb{V}, +_\mathbb{F}, \times_\mathbb{F}, \cdot)$ gives rise to a collection of
unary functions given by $f_r(v)=r\cdot v$ for each scalar $r\in \mathbb{F}$ and vector $v\in \mathbb{V}$. We may, therefore, consider the algebra
$(\mathbb{V}, \mathcal{F})$, where $\mathcal{F}$ consists of the vector operation $+$ and each of the functions $f_r$. Notice that the function $f_0$
induced by the zero scalar sends every vector to the zero vector $O$. As a consequence, if $\vec{b}$ is an infinite sequence of vectors, we can
trivially obtain the infinite sequence consisting of the zero vector as a reduction. The implication of this observation is that $(\mathbb{V},
\mathcal{F})$ is trivially a Ramsey algebra.

To avoid this trivial situation, we consider the algebra $(\mathbb{V}, \mathcal{K})$, where $\mathcal{K}=\mathcal{F}\setminus\{f_0\}$. The symbol
$\mathcal{K}$ will be reserved for this class of functions in this section.

We proceed with a discussion of the orderly terms over $\mathcal{K}$. First, note that the composition $h(x, y)=+(f_r(x), f_s(y))$ is an orderly term.
The same is true if we have $n$ functions composed iteratively in this fashion. In particular, if $r_1, r_2, \ldots, r_n$ are nonzero scalars, then
\begin{eqnarray}
h(x_1, \ldots, x_n) &=& +\left(\cdots\left(+\left(+\left(f_{r_1}(x_1), f_{r_2}(x_2)\right), f_{r_3}(x_3)\right), \ldots\right), f_{r_n}(x_n)\right) \nonumber \\
&=& \sum_{i=1}^nr_i\cdot x_i \label{whatordtermslooklike}
\end{eqnarray}
is an orderly term over $\mathcal{K}$. The first lemma of this section states that orderly terms over $\mathcal{K}$ are of the form given by Eq.
\ref{whatordtermslooklike}.

\begin{lem}\label{OToverK}
$f\in\emph{OT}(\mathcal{K})$ if and only if $f(x_1, \ldots, x_n)=\sum_{i=1}^nr_i\cdot x_i$ for some nonzero scalars $r_1, \ldots, r_n$.
\end{lem}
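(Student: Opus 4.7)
The ``if'' direction has essentially been dispensed with in the text preceding the lemma: given any nonzero scalars $r_1, \ldots, r_n$, the explicit expression in Eq.\ \ref{whatordtermslooklike} exhibits $(x_1, \ldots, x_n)\mapsto \sum_{i=1}^n r_i\cdot x_i$ as an iterated orderly composition of members of $\mathcal{K}=\{+_\mathbb{V}\}\cup\{f_r:r\in\mathbb{F}\setminus\{0\}\}$ together with $\id_\mathbb{V}$, so this direction needs only a brief remark.

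For the ``only if'' direction, the plan is a straightforward structural induction on the generation of orderly terms over $\mathcal{K}$. For the base cases, $\id_\mathbb{V}(x_1)=1\cdot x_1$ exhibits the identity in the required form with the nonzero coefficient $1$, while each $f_r\in\mathcal{K}$ satisfies $f_r(x_1)=r\cdot x_1$ with $r\neq 0$ by definition of $\mathcal{K}$. For the inductive step, suppose $f(\bar{x}_1,\ldots,\bar{x}_k)=g(h_1(\bar{x}_1),\ldots,h_k(\bar{x}_k))$ with $g\in\mathcal{K}$ and with each $h_j\in\ot(\mathcal{K})$ assumed to be of the required form. There are then only two subcases to consider, according to whether $g$ is the binary $+_\mathbb{V}$ or some unary $f_r$.

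If $g=+_\mathbb{V}$, then $k=2$ and, writing $h_1(\bar{x}_1)=\sum_{i=1}^{n_1}r_i\cdot x_i^{(1)}$ and $h_2(\bar{x}_2)=\sum_{j=1}^{n_2}s_j\cdot x_j^{(2)}$ by the inductive hypothesis, concatenating the coefficient lists and the variable lists yields
\begin{equation*}
f(\bar{x}_1,\bar{x}_2)=\sum_{i=1}^{n_1}r_i\cdot x_i^{(1)}+_\mathbb{V}\sum_{j=1}^{n_2}s_j\cdot x_j^{(2)},
\end{equation*}
which is visibly of the required form with all coefficients nonzero. If instead $g=f_r$ for some $r\neq 0$, then $k=1$, and by the inductive hypothesis $h_1(\bar{x}_1)=\sum_{i=1}^{n_1}s_i\cdot x_i$ with each $s_i\neq 0$. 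Applying the vector-space axiom $r\cdot(u+_\mathbb{V}v)=r\cdot u+_\mathbb{V}r\cdot v$ repeatedly together with the scalar-multiplication associativity $r\cdot(s_i\cdot x_i)=(r\times_\mathbb{F}s_i)\cdot x_i$, we obtain
\begin{equation*}
f(\bar{x}_1)=r\cdot h_1(\bar{x}_1)=\sum_{i=1}^{n_1}(r\times_\mathbb{F}s_i)\cdot x_i,
\end{equation*}
and each product $r\times_\mathbb{F}s_i$ is nonzero since $\mathbb{F}$ is a field (and hence has no zero divisors). This completes the induction.

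There is no real obstacle here; the proof is a clean structural induction. The only tiny point that needs to be noted, rather than truly proved, is that nonzero coefficients are preserved under the two inductive operations. Preservation under case~1 is immediate (no coefficient is altered), and preservation under case~2 relies only on the fact that a field contains no zero divisors, which is precisely why the restriction ``$r\neq 0$'' in the definition of $\mathcal{K}$ propagates to the coefficients of arbitrary orderly terms.
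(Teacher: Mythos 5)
Your proof is correct and follows essentially the same route as the paper's: the ``if'' direction by pointing to Eq.~\ref{whatordtermslooklike}, and the ``only if'' direction by structural induction on the generation of orderly terms, splitting the composition step according to whether the outer operation is $+_\mathbb{V}$ or some $f_r$ and using the absence of zero divisors to preserve nonzero coefficients. The only (cosmetic) difference is that the paper also records the base case for $+_\mathbb{V}$ itself as a generator of $\ot(\mathcal{K})$, i.e.\ $+_\mathbb{V}(x_1,x_2)=1\cdot x_1+_\mathbb{V}1\cdot x_2$, which your base-case list omits but which is subsumed by your case $g=+_\mathbb{V}$ with $h_1=h_2=\id_\mathbb{V}$.
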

\begin{proof}
We have seen that $\sum_{i=1}^nr_i\cdot x_i$ defines an orderly term over $\mathcal{K}$. As for the converse, we do induction on the generation of the
orderly terms over $\mathcal{K}$. Thus, first observe that the claim is clearly true for the orderly term $f_r$ of any nonzero scalar $r$; it is also
true when $f$ is the identity function since $\text{id}_\mathbb{V}=f_1$. The conclusion is equally trivial for vector addition.

Now, suppose $g\in\mathcal{K}$ and $h_1, h_2$ are orderly terms satisfying Eq. \ref{whatordtermslooklike}, i.e. for some nonzero scalars $r_i, s_j,
i\in\{1, \ldots, n\}, j\in\{1, \ldots, m\}$,
\begin{eqnarray*}\label{whatOTlookslike}
h_1(x_1, \ldots, x_n) &=& \sum_{i=1}^n r_i\cdot x_i, \\
h_2(y_1, \ldots, y_m) &=& \sum_{i=1}^m s_i\cdot y_i.
\end{eqnarray*}
If $f(x_1, \ldots, x_n, y_1, \ldots, y_m)=g(h_1(x_1, \ldots, x_n), h_2(y_1, \ldots, y_m))$, where $g$ is vector addition, then $f(x_1, \ldots, x_n, y_1,
\ldots, y_m)=\sum_{i=1}^n r_i\cdot x_i+\sum_{i=1}^m s_i\cdot y_i$, which is clearly of the desired form. On the other hand, given that $f(x_1, \ldots,
x_n)=f_r(h_1(x_1, \ldots, x_n))$ for some nonzero scalar $r$, we are led to $f(x_1, \ldots, x_n)=r\cdot\sum_{i=1}^nr_ix_i=\sum_{i=1}^n (rr_i)\cdot x_i$
and none of the scalars $rr_i$ is 0. Hence, such $f$ also assumes the form of Eq. \ref{whatordtermslooklike}. This concludes the proof of the lemma.
\end{proof}

A short discussion on linear independence is helpful. Suppose $u_1, \ldots, u_n$ are linearly independent vectors and $v=r_1\cdot u_1+\cdots+r_n\cdot
u_n$, then it is easy to check that, if $f$ is an orderly term over $\mathcal{K}$ and $f(u_1, \ldots, u_n)=v$, then $f$ is \emph{unique}, namely $f(x_1,
\ldots, x_n)=\sum_{i=1}^nr_ix_1$. We stress that this uniqueness is not met should the vectors $u_1, u_2, \ldots, u_n$ be linearly dependent.
Specifically, the zero vector $O$ is not the image of any orderly term over $\mathcal{K}$ applied to a tuple consisting of linearly independent vectors.

The first result in this section concerns the structure $(\mathbb{V}, \mathcal{K})$ for finite dimensional  vector space.

\begin{thm}
$(\mathbb{V}, \mathcal{K})$ is a Ramsey algebra for every finite dimensional vector space.
\end{thm}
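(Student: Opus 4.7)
The plan is to induct on $d = \dim \mathbb{V}$. The base case $d = 0$ is immediate, since $\mathbb{V} = \{O\}$ forces $\fr_\mathcal{K}(\vec{b}) = \{O\}$, automatically monochromatic for any $X \subseteq \mathbb{V}$. For the inductive step with $d \geq 1$, the central idea is that one can always reduce an arbitrary $\vec{b} \in {^\omega}\mathbb{V}$ to some $\vec{a} \leq_\mathcal{K} \vec{b}$ lying entirely within a prescribed hyperplane, at which point the inductive hypothesis finishes the job.

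Concretely, I would fix any nonzero linear functional $\pi : \mathbb{V} \to \mathbb{F}$, so that $\ker(\pi)$ is a subspace of dimension $d-1$, and split by a pigeonhole argument. If infinitely many $\vec{b}(i)$ satisfy $\pi(\vec{b}(i)) = 0$, I take $\vec{a}$ to be the subsequence consisting of those terms, which already lies in $\ker(\pi)$. Otherwise, after passing to a tail on which $\pi(\vec{b}(i)) \neq 0$ for all $i$, I pair consecutive terms via
\[
\vec{a}(j) = \vec{b}(2j) + r_j \cdot \vec{b}(2j+1), \qquad r_j = -\pi(\vec{b}(2j))/\pi(\vec{b}(2j+1)).
\]
Both $\pi$-values being nonzero forces $r_j \neq 0$, so Lemma \ref{OToverK} certifies $(x_1,x_2) \mapsto 1 \cdot x_1 + r_j \cdot x_2$ as a bona fide orderly term over $\mathcal{K}$; hence $\vec{a} \leq_\mathcal{K} \vec{b}$, and $\pi(\vec{a}(j)) = 0$ by design.

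Next I would invoke the inductive hypothesis on the $(d{-}1)$-dimensional subspace $\ker(\pi)$, which is itself a vector space over $\mathbb{F}$ closed under every operation in $\mathcal{K}$. This yields $\vec{a}\:' \leq_\mathcal{K} \vec{a}$ with all terms in $\ker(\pi)$ and with $\fr_\mathcal{K}(\vec{a}\:')$, computed inside $\ker(\pi)$, either contained in or disjoint from $X \cap \ker(\pi)$. By Lemma \ref{OToverK} every orderly term over $\mathcal{K}$ is an $\mathbb{F}$-linear combination of its arguments with nonzero coefficients, so it sends $\ker(\pi)^n$ into $\ker(\pi)$; consequently $\fr_\mathcal{K}(\vec{a}\:')$ computed in the ambient $\mathbb{V}$ coincides with the one computed inside $\ker(\pi)$, and monochromaticity for $X \cap \ker(\pi)$ upgrades to monochromaticity for $X$.

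The only step requiring real care is the pairing construction in the second pigeonhole branch: I must simultaneously kill the $\pi$-component and keep every coefficient nonzero so that the combination qualifies as an orderly term in the sense of Lemma \ref{OToverK}. Both demands align precisely because, on the tail, every $\vec{b}(i)$ has nonzero $\pi$-value, which is exactly what prevents the cancelling coefficient $r_j$ from vanishing. Everything else is routine linear algebra once Lemma \ref{OToverK} is in hand, and I do not anticipate any further subtlety.
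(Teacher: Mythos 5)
Your argument is correct, but it takes a genuinely different route from the paper's. The paper does not induct on dimension at all: it groups $\vec{b}$ into consecutive blocks of $n+1$ vectors (where $n=\dim\mathbb{V}$), observes that each block is linearly dependent, discards the indices carrying zero coefficients so that the surviving relation $\sum_j r_{i_j}\cdot\vec{b}(i_j)=O$ with all $r_{i_j}\neq 0$ is realized by an orderly term over $\mathcal{K}$ in the sense of Lemma \ref{OToverK}, and thereby collapses $\vec{b}$ in a single pass to the constant sequence $\langle O,O,\ldots\rangle$, whose $\FR$-set is $\{O\}$ and hence homogeneous for every $X$. Your induction via a linear functional $\pi$ reaches the same endpoint (unwinding your recursion $d$ times also lands on the all-$O$ sequence), but peels off one dimension at a time. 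What your version buys is that the nonvanishing of coefficients --- the only point where $\mathcal{K}$ differs from the full scalar action --- is completely explicit: the cancelling coefficient $r_j=-\pi(\vec{b}(2j))/\pi(\vec{b}(2j+1))$ is visibly nonzero, whereas the paper must pass to the support of an abstract dependence relation to get an orderly term. The cost is the extra bookkeeping of restricting the algebra to $\ker(\pi)$ and checking that $\FR_{\mathcal{K}}$ computed in the subspace agrees with $\FR_{\mathcal{K}}$ computed in $\mathbb{V}$; your appeal to Lemma \ref{OToverK} (orderly terms are linear combinations with nonzero coefficients, hence preserve subspaces in both directions) does handle this, and transitivity of $\leq_{\mathcal{K}}$ closes the argument. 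The paper's one-shot collapse is shorter; both are sound.
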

\begin{proof}
Let the vector space be $n$-dimensional and let $\vec{b}\in{^\omega\!\mathbb{V}}$. The proof is by recursion; we will wave our hand on this proof.

Since $\vec{b}(0), \vec{b}(1), \ldots, \vec{b}(n)$ are $n+1$ vectors, they are linearly dependent and so for some scalars $r_0, r_1, \ldots, r_n$, not
all of which are $0$, we obtain $\sum_{i=0}^n r_i\cdot \vec{b}(i)=O$ (this need not correspond to an orderly term because, if it did, then none of the
scalars could be $0$). If $i_0<\cdots<i_k, k\leq n$, are indices corresponding to the nonzero scalars, then we have $\sum_{j=0}^k r_{i_j}\cdot
\vec{b}({i_j})=O$ as the image of $(\vec{b}(i_0), \ldots, \vec{b}(i_k))$ under the corresponding orderly term.

We repeat the above procedure on the subsequent $n+1$ vectors recursively using appropriate nonzero scalars. This generates a sequence of zero vectors
$O$ as a reduction of $\vec{b}$, which is homogeneous for any given $X\subseteq\mathbb{V}$.
\end{proof}

We now turn our attention to infinite dimensional vector spaces. To obtain our next theorem, we will be applying Corollary 4.3 of \cite{teh2016ramsey};
we state the result here for convenience.

\begin{lem}\label{corTeh}
Suppose $\mathcal{H}$ is a collection of unary operations of a set $A$, $\mathcal{G}$ a collection of nonunary operations on $A$, and $S$ is the set
$\{a\in A:f(a)=a\;\text{for each}\;f\in\mathcal{H}\}$. If $(A, \mathcal{G})$ is a Ramsey algebra, then $(A, \mathcal{H}\cup\mathcal{G})$ is a Ramsey
algebra if and only if, for each $\vec{b}\in{^\omega\!A}$, there exists $\vec{a}\in{^\omega\!A}$ such that
$\vec{a}\leq_{\mathcal{H}\cup\mathcal{G}}\vec{b}$ and $\FR_\mathcal{G}(\vec{a})\subseteq S$.
\end{lem}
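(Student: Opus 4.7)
The plan is to treat the two directions separately.

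For the ``if'' direction, given $\vec{b}$ and $X\subseteq A$, I would first apply the hypothesis to obtain $\vec{a}'\leq_{\mathcal{H}\cup\mathcal{G}}\vec{b}$ with $\FR_\mathcal{G}(\vec{a}')\subseteq S$, and then apply the Ramsey property of $(A,\mathcal{G})$ to $\vec{a}'$ and $X$ to get $\vec{a}\leq_\mathcal{G}\vec{a}'$ that is homogeneous for $X$ with respect to $\mathcal{G}$. Since $\text{OT}(\mathcal{G})\subseteq\text{OT}(\mathcal{H}\cup\mathcal{G})$, transitivity of $\leq_{\mathcal{H}\cup\mathcal{G}}$ gives $\vec{a}\leq_{\mathcal{H}\cup\mathcal{G}}\vec{b}$. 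The pivotal point is then to verify that $\FR_{\mathcal{H}\cup\mathcal{G}}(\vec{a})=\FR_\mathcal{G}(\vec{a})$, which I would establish by structural induction on $F\in\text{OT}(\mathcal{H}\cup\mathcal{G})$, showing that for every finite subsequence $\bar{a}_0$ of $\vec{a}$ there is some $G\in\text{OT}(\mathcal{G})$ with $F(\bar{a}_0)=G(\bar{a}_0)$. The key observation is that $\FR_\mathcal{G}(\vec{a})\subseteq\FR_\mathcal{G}(\vec{a}')\subseteq S$ (the homogeneous analogue of Lemma \ref{frsubsets}), so every intermediate subterm value already lies in $S$ and is fixed by each $h\in\mathcal{H}$; in the inductive step with $F=g(F_1)$ and $g\in\mathcal{H}$ unary, this permits me to replace $g$ by the identity and take $G=G_1$. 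Homogeneity of $\vec{a}$ for $X$ then transfers from $\mathcal{G}$ to $\mathcal{H}\cup\mathcal{G}$.

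For the ``only if'' direction, the naive two-coloring by $\{S,A\setminus S\}$ is inadequate, since the Ramsey reduction could yield $\FR_{\mathcal{H}\cup\mathcal{G}}(\vec{a})\subseteq A\setminus S$, a situation that is genuinely consistent with $A\setminus S$ being $\mathcal{H}$-closed. To circumvent this, I would invoke Lemma \ref{katetov}. Fix an auxiliary symbol $\alpha\notin A$, an element $a_0\in A$, and, via choice, for each $a\in A\setminus S$ some $h_a\in\mathcal{H}$ with $h_a(a)\neq a$. Define $T\colon A\cup\{\alpha\}\to A\cup\{\alpha\}$ by $T(\alpha)=a_0$, $T(s)=\alpha$ for $s\in S$, and $T(a)=h_a(a)$ for $a\in A\setminus S$; then $T$ is fixed-point free. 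Katetov supplies a three-partition $A\cup\{\alpha\}=P_1\cup P_2\cup P_3$ with $T[P_i]\cap P_i=\varnothing$, which induces $A=Q_1\cup Q_2\cup Q_3$ via $Q_i=P_i\cap A$. By Remark \ref{finitepartition}, the Ramsey hypothesis on $(A,\mathcal{H}\cup\mathcal{G})$ produces $\vec{a}\leq_{\mathcal{H}\cup\mathcal{G}}\vec{b}$ and $i^*\in\{1,2,3\}$ with $\FR_{\mathcal{H}\cup\mathcal{G}}(\vec{a})\subseteq Q_{i^*}$. If some $c\in\FR_\mathcal{G}(\vec{a})$ were to lie outside $S$, then both $c$ and $T(c)=h_c(c)$ would belong to $\FR_{\mathcal{H}\cup\mathcal{G}}(\vec{a})\subseteq Q_{i^*}$ (since $h_c\circ f$ remains an orderly term whenever $f$ is), contradicting $T[Q_{i^*}]\cap Q_{i^*}\subseteq T[P_{i^*}]\cap P_{i^*}=\varnothing$. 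Hence $\FR_\mathcal{G}(\vec{a})\subseteq S$, as required.

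The main obstacle is the design of the coloring in the ``only if'' direction. A plain two-coloring $\{S,A\setminus S\}$ cannot rule out the Ramsey outcome in which the reduction is trapped entirely in $A\setminus S$; Katetov's three-coloring is what breaks this deadlock, by turning the fixed-point-freeness of $T$ into the forced statement that every non-$S$ element must exit its own color class after a single $\mathcal{H}$-step. The ``if'' direction is comparatively routine once the induction hypothesis is formulated in the stronger form that preserves the subsequence of arguments, which is necessary because membership in $\FR_\mathcal{G}$ is defined in terms of a single orderly term applied to a single subsequence rather than a composition with differing subsequences.
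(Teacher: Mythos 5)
Your argument is correct. Note that the paper does not actually prove this lemma---it is imported verbatim as Corollary~4.3 of \cite{teh2016ramsey}---but both halves of your proof are sound and match the techniques the paper itself uses elsewhere: the ``only if'' direction is exactly the Katet\v{o}v three-colouring argument deployed in the proof of Theorem~\ref{premprop}(2), and the ``if'' direction correctly reduces $\FR_{\mathcal{H}\cup\mathcal{G}}(\vec{a})$ to $\FR_{\mathcal{G}}(\vec{a})$ by the induction you describe, using that every $\mathcal{G}$-subterm value lands in $S$ and is therefore fixed by each member of $\mathcal{H}$.
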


For our purpose, the set $A$ stated in the lemma will be the set $\mathbb{V}$ of vectors in question and $\mathcal{H}=\{f_r:r\in\mathbb{F}, r\neq 0\},
\mathcal{G}=\{+\}$ (thus $\mathcal{K}=\mathcal{H}\cup\mathcal{G}$). It is easy to verify that the set $S=\{v\in\mathbb{V}:f(v)=v\;\text{for
all}\;f\in\mathcal{H}\}$ of fixed points of $\mathcal{H}$ is either $\mathbb{V}$ or $\{O\}$, depending on whether the underlying field is respectively
$\mathbb{F}_2$ or not.

\begin{thm}\label{infdimvsresult}
For no infinite dimensional vector space over $\mathbb{F}\neq\mathbb{F}_2$ is $(\mathbb{V}, \mathcal{K})$ a Ramsey algebra.
\end{thm}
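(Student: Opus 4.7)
The plan is to apply Lemma \ref{corTeh} with $\mathcal{H}=\{f_r:r\in\mathbb{F},\;r\neq 0\}$ and $\mathcal{G}=\{+\}$, so that $\mathcal{K}=\mathcal{H}\cup\mathcal{G}$. First I would verify the hypothesis that $(\mathbb{V},\mathcal{G})$ is a Ramsey algebra: since $(\mathbb{V},+)$ is a semigroup, this is immediate from Theorem \ref{semigroup}. Next, as noted in the paragraph following Lemma \ref{corTeh}, the set of common fixed points of $\mathcal{H}$ is $S=\{O\}$ because $\mathbb{F}\neq\mathbb{F}_2$ (any scalar $r\neq 0,1$ has $f_r(v)=v$ only when $v=O$). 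Thus, by the lemma, to prove that $(\mathbb{V},\mathcal{K})$ is \emph{not} a Ramsey algebra it suffices to produce a single sequence $\vec{b}\in{^\omega\!\mathbb{V}}$ for which no reduction $\vec{a}\leq_\mathcal{K}\vec{b}$ satisfies $\FR_{\{+\}}(\vec{a})\subseteq\{O\}$.

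I would take $\vec{b}$ to be a sequence of linearly independent vectors, which is possible precisely because $\mathbb{V}$ is infinite dimensional. The key ingredient is then Lemma \ref{OToverK}: every orderly term $F\in\ot(\mathcal{K})$ has the form $F(x_1,\ldots,x_n)=\sum_{i=1}^n r_i\cdot x_i$ with each $r_i$ a \emph{nonzero} scalar. Consequently, for any $\vec{a}\leq_\mathcal{K}\vec{b}$, each coordinate $\vec{a}(j)$ is a linear combination of finitely many entries of $\vec{b}$ in which every coefficient is nonzero. By the linear independence of $\vec{b}$, no such combination can equal $O$, so in particular $\vec{a}(0)\neq O$. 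Since $\vec{a}(0)\in\FR_{\{+\}}(\vec{a})$ trivially, this witnesses $\FR_{\{+\}}(\vec{a})\not\subseteq\{O\}=S$, completing the contradiction.

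The whole argument is essentially two lines once the right packaging is in place, so there is no real obstacle beyond correctly setting up Lemma \ref{corTeh} and observing that the ``no zero scalar'' restriction baked into $\mathcal{K}$ is what lets linear independence of $\vec{b}$ rule out every possible reduction entry from being $O$. The role of the hypothesis $\mathbb{F}\neq\mathbb{F}_2$ is confined to pinning down $S=\{O\}$; over $\mathbb{F}_2$ one would instead have $S=\mathbb{V}$ and the criterion of Lemma \ref{corTeh} would become trivially satisfiable, which explains why the theorem must exclude that field.
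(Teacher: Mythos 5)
Your proposal is correct and follows essentially the same route as the paper: both apply Lemma \ref{corTeh} with $\mathcal{H}=\{f_r:r\neq 0\}$, $\mathcal{G}=\{+\}$, take $\vec{b}$ to be a linearly independent sequence, and use the fact (from Lemma \ref{OToverK}) that orderly terms over $\mathcal{K}$ have all nonzero coefficients, so no reduction coordinate can be $O$ and hence $\FR_{\mathcal{G}}(\vec{a})\not\subseteq S=\{O\}$. Your explicit remark on why $\mathbb{F}\neq\mathbb{F}_2$ is needed matches the paper's discussion preceding the theorem.
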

\begin{proof}
First, note that $(\mathbb{V}, \mathcal{G})$ is a Ramsey algebra since it is a group.

We let $\vec{b}=\langle u_0, u_1, \ldots\rangle$, where $u_0, u_1, \ldots$ are linearly independent vectors. Recall that the zero vector is not the
image of any orderly term over $\mathcal{K}$ operated on a tuple of linearly independent vectors. As such, for any $\vec{a}\leq_\mathcal{K}\vec{b}$, the
element $\vec{a}(0)$, which is not the zero vector, is not in $S$ and hence $\text{FR}_\mathcal{G}(\vec{a})\not\subseteq S$ for any such $\vec{a}$.
This, by Lemma \ref{corTeh}, means that the algebra is not a Ramsey algebra.
\end{proof}

A direct proof can also be given. The intuition is to pick out a fine bit of vectors to form $X$. Again, let $\vec{b}=\langle u_0, u_1, \ldots\rangle$,
where $u_0, u_1, \ldots$ are linearly independent vectors. Recall from Lemma \ref{OToverK} that for each $f\in\text{OT}(\mathcal{K})$, we have $f(x_1,
\ldots, x_n)=\sum_{i=1}^nr_i\cdot x_i$ for some nonzero scalars $r_1, \ldots, r_n$. Let $X$ be defined by
\begin{equation}
X=\left\{v\in\mathbb{V}:\Phi(v)\right\},
\end{equation}
where $\Phi(v)$ is the statement ``$v=u_{i_0}+\sum_{j=1}^nr_j\cdot u_{i_j}$, $r_1, \ldots, r_n$ are some nonzero scalars, and $i_0<\cdots<i_n$.''

Given $\vec{a}\leq_\mathcal{K}\vec{b}$, we have $\vec{a}(0)=\sum_{j=0}^n r_j\cdot u_{i_j}$ for some $i_0<\cdots<i_n$ by Lemma \ref{OToverK}. If $r_0=1$,
then $\vec{a}(0)\in X\cap\text{FR}_\mathcal{K}(\vec{a})$, while a simple application of $f_2$ would put $f_2(\vec{a}(0))\in
X^C\cap\text{FR}_\mathcal{K}(\vec{a})$. On the other hand, if $r_0\neq 1$, then we have $\vec{a}(0)\in X^C\cap\text{FR}_\mathcal{K}(\vec{a})$, whereas
$f_{r_0^{-1}}(\vec{a}(0))\in X\cap\text{FR}_\mathcal{K}(\vec{a})$. In either case, we have $ X\cap\text{FR}_\mathcal{K}(\vec{a})\neq\varnothing$ and $
X^C\cap\text{FR}_\mathcal{K}(\vec{a})\neq\varnothing$. Thus, the conclusion of the theorem holds.

Finally, we also have the following result for vector spaces, finite or infinite, over the field $\mathbb{F}_2$.

\begin{thm}
$(\mathbb{V}, \mathcal{K})$ is a Ramsey algebra for every vector space over the field $\mathbb{F}_2$.
\end{thm}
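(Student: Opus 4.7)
The plan is to invoke Lemma \ref{corTeh}, since the setup in the preceding discussion already prepares exactly this scenario. Take $\mathcal{G}=\{+\}$ and $\mathcal{H}=\{f_r:r\in\mathbb{F},\,r\neq 0\}$, so that $\mathcal{K}=\mathcal{H}\cup\mathcal{G}$.

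First I would observe that $(\mathbb{V},+)$ is an abelian group, hence a semigroup, so by Hindman's Theorem (Theorem \ref{semigroup}) $(\mathbb{V},\mathcal{G})$ is a Ramsey algebra. This discharges the hypothesis of Lemma \ref{corTeh}.

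Next I would pin down the set $S$ of fixed points of $\mathcal{H}$. Over $\mathbb{F}_2$, the only nonzero scalar is $1$, so $\mathcal{H}=\{f_1\}=\{\mathrm{id}_{\mathbb{V}}\}$ and hence $S=\mathbb{V}$. Consequently the condition stated in Lemma \ref{corTeh}, namely that for every $\vec{b}\in{^\omega\!\mathbb{V}}$ there exist $\vec{a}\leq_{\mathcal{K}}\vec{b}$ with $\mathrm{FR}_{\mathcal{G}}(\vec{a})\subseteq S$, is trivially met: taking $\vec{a}=\vec{b}$ works by the reflexivity of $\leq_{\mathcal{K}}$, since $\mathrm{FR}_{\mathcal{G}}(\vec{b})\subseteq\mathbb{V}=S$. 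Lemma \ref{corTeh} then delivers the conclusion that $(\mathbb{V},\mathcal{K})$ is a Ramsey algebra.

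There is essentially no obstacle here; the theorem is the ``easy'' companion of Theorem \ref{infdimvsresult}, and all the work has already been done in setting up the framework. If one preferred a self-contained argument avoiding Lemma \ref{corTeh}, the alternative is just as direct: by Lemma \ref{OToverK}, every $f\in\mathrm{OT}(\mathcal{K})$ has the form $f(x_1,\ldots,x_n)=\sum_{i=1}^n r_i\cdot x_i$ with nonzero scalars, but over $\mathbb{F}_2$ this forces each $r_i=1$, so $\mathrm{OT}(\mathcal{K})$ and $\mathrm{OT}(\{+\})$ define the same functions on $\mathbb{V}$. Hence $\leq_{\mathcal{K}}$ coincides with $\leq_{\{+\}}$ and $\mathrm{FR}_{\mathcal{K}}(\vec{b})=\mathrm{FR}_{\{+\}}(\vec{b})$ for every $\vec{b}$, and the result again reduces immediately to Hindman's Theorem applied to the semigroup $(\mathbb{V},+)$.
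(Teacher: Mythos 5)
Your proposal is correct, and your ``alternative'' self-contained argument is precisely the paper's proof: over $\mathbb{F}_2$ the only nonzero scalar is $1$, so $\mathcal{K}=\{+,f_1\}=\{+,\id_\mathbb{V}\}$, $\OT(\mathcal{K})=\OT(\{+\})$, and the result is Hindman's Theorem for the semigroup $(\mathbb{V},+)$. Your primary route through Lemma \ref{corTeh} is a valid, if slightly heavier, repackaging of the same observation: with $\mathcal{H}=\{f_1\}=\{\id_\mathbb{V}\}$ the fixed-point set is $S=\mathbb{V}$, so the lemma's condition holds trivially with $\vec{a}=\vec{b}$; the paper reserves Lemma \ref{corTeh} for the negative result (Theorem \ref{infdimvsresult}) and proves this theorem by the direct identification of $\OT(\mathcal{K})$. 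Either way the argument is complete.
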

\begin{proof}
Note that when $\mathbb{F}=\mathbb{F}_2$, we have $\mathcal{K}=\{+, f_1\}=\{+, \text{id}_\mathbb{V}\}$, whence $\text{OT}(\mathcal{K})$ is the same as
$\text{OT}(\{+\})$. The theorem follows by the fact that $(\mathbb{V}, +)$ is a semigroup, hence a Ramsey algebra (Proposition \ref{semigroup}).
\end{proof}

\section{Conclusion}\label{conc}
Much of the algebraic and analytical aspects of vector spaces have been studied and utilized ever since the notion of a vector space came into
prominence in mathematics and science. This paper offers a glimpse at the less studied combinatorics of vector spaces in the context of a combinatorial
notion which is at its infancy stage.

We wrap up with a summary of the results of Section 5 in the language of Ramsey space: The space $\mathfrak{R}^{\vec{e}}(\mathbb{V}, \mathbb{F},
+_\mathbb{V}, +_\mathbb{F}, \times_\mathbb{F}, \cdot)$ is a Ramsey space if the field $\mathbb{F}$ is a finite field; otherwise, it is a Ramsey space
only if $\vec{e}$ is constant with value $1$.

\section*{acknowledgements}

The authors gratefully acknowledge the support of the Fundamental Research Grant Scheme No.~203/PMATHS/6711464 of the Ministry of Education, Malaysia,
and Universiti Sains Malaysia.



\end{document}